\definecolor{dkgreen}{rgb}{0,0.6,0}
\definecolor{gray}{rgb}{0.5,0.5,0.5}
\definecolor{mauve}{rgb}{0.58,0,0.82}
\tiny\color{gray},
\newcommand\PSL{\mathrm{PSL}}
\newcommand\PSp{\mathrm{PSp}}
\newcommand\PSU{\mathrm{PSU}}
\newcommand\SU{\mathrm{SU}}
\newcommand\PGL{\mathrm{PGL}}
\newcommand\Aut{\mathrm{Aut}}
\newcommand\GQ{\mathsf{GQ}}
\newcommand\LSce{\mathsf{LSce}}
\newcommand\AtlasRep{\textsf{AtlasRep}}
\renewcommand\leq{\leqslant}
\renewcommand\geq{\geqslant}
\newtheorem{theorem}{Theorem}[section]
\newtheorem{corollary}{Corollary}[theorem]
\newtheorem{lemma}{Lemma}[section]
\newtheorem*{conjecture*}{Conjecture}
\newtheorem*{proposition*}{Proposition}
\theoremstyle{definition}
\theoremstyle{remark}
\title[No almost simple sporadic group acts point-primitively on a GQ]{No almost simple sporadic group acts primitively\\ on the points of a generalised quadrangle}
\author{John Bamberg and James Evans}
\address{Centre for the Mathematics of Symmetry and Computation,\\ The University of Western Australia, Australia.}
\email[Bamberg]{john.bamberg@uwa.edu.au}
\email[Evans]{James.Evans10@uon.edu.au}
\subjclass[2010]{51E12,20B05,20B15,20B25}
\keywords{generalised quadrangles, hemisystems, m-ovoids, sporadic groups, primitive permutation groups}
\thanks{The research for this paper was primarily conducted as part of an honours project completed at the University of Western Australia (with first author as supervisor and second as student). The second author acknowledges the support of the Winthrop Scholarship in completing his studies.
Both authors thank Emilio Pierro for helpful discussions on some of the material in this paper.}
\begin{document}

\begin{abstract}
A \textit{generalised quadrangle} is a point-line incidence geometry $\mathcal{G}$ such that: (i) any two points lie on at most one line, and (ii) given a line $L$ and a point $p$ not incident with $L$, there is a unique point on $L$ collinear with $p$. They are a specific case of the \textit{generalised polygons} introduced by Tits \cite{Tits:1959cl}, and these structures and their automorphism groups are of some importance in finite geometry. An integral part of understanding the automorphism groups of finite generalised quadrangles is knowing which groups can act primitively
on their points, and in particular, which almost simple groups arise as automorphism groups.
We show that no almost simple sporadic group can act primitively on the points of a finite (thick) generalised quadrangle. 
We also present two new ideas contributing towards analysing point-primitive groups acting on generalised quadrangles. The first is the outline and implementation of an algorithm for determining whether a given group can act primitively on the points of some generalised quadrangle. The second is the discussion of a conjecture resulting from observations made in the course of this work: any group acting primitively on the points of a generalised quadrangle must either act transitively on lines or have exactly two line-orbits, each containing half of the lines.
\end{abstract}

\maketitle

\vspace{-0.8cm}
\section{Introduction}

A \textit{generalised $n$-gon} (or \textit{generalised polygon}) may be defined as a point-line incidence geometry whose incidence graph has diameter $n$ and girth $2n$. In addition, we require that every line is incident with at least three points, and each point is incident with at least three lines. (These are the so-called \textit{thick} generalised polygons). By the theorem of Feit and Higman \cite{Feit:1964os}, a (thick) finite generalised $n$-gon exists only for $n=2,3,4,6$ and $8$ \cite{Van-Maldeghem:1998sj}.

The generalised polygons were first introduced by Tits in \cite{Tits:1959cl}, in order to study the Lie type groups by associating them to certain geometric structures. The generalised polygons are a particular case of Tits' more general theory of \textit{buildings}. In particular, an equivalent definition of a generalised polygon is that it is an irreducible spherical building of rank $2$. Tits \cite{TitsBuildings} classified the spherical buildings of rank $3$ or more, and leaving the generalised polygons as the final missing piece in our understanding of spherical buildings.

The polygons which arise from Lie type groups came to be known as the \textit{classical generalised polygons}\footnote{Exactly which polygons are classical varies between authors. This paper only includes the \textit{Moufang polygons}, whereas in some cases the Moufang polygons and their duals are called classical.}. These geometries are often important in their own right. The classical triangles are exactly the Desarguesian projective planes (related to $\PSL(3,q)$ groups), while the classical generalised quadrangles are classical projective polar spaces (related to $\PSp(4,q)$, $\PSU(4,q)$ and $\PSU(5,q)$). The classical hexagons were discovered by Tits in \cite{Tits:1959cl}, and they are related to the $G_{2}(q)$ and ${}^{3}D_{4}(q^{3})$ groups, while the classical octagons correspond to the ${}^{2}F_{4}(2^{2k+1})$ groups.

There are non-classical generalised polygons, so not all generalised polygons can be studied using the power of Lie-group theory (see e.g. \cite{Van-Maldeghem:1998sj}, \cite{PayneThas:2009}). However the connection to group theory, via the automorphism groups of the polygons, remains their most interesting and well studied aspect. A major theme in the literature has been to explicitly classify all of the examples of generalised polygons whose automorphism groups obey certain conditions. One of the first general results of this type was by Fong and Seitz \cite{FongSeitz}
who classified the finite \textit{Moufang} generalised polygons. 
Using the Classification of Finite Simple Groups, Buekenhout and Van Maldeghem  \cite{Buekenhout:1994zp} relaxed
the Moufang condition considerably to having an automorphism group that acts distance-transitively on the points of a generalised polygon.
A non-classical generalised quadrangle appears in this classfication: the unique generalised quadrangle $\GQ(3,5)$ of order $(3,5)$.

\begin{theorem}[Buekenhout and Van Maldeghem, \cite{Buekenhout:1994zp}]
A generalised polygon with point-distance-transitive automorphism group is classical, dual classical, or $\GQ(3,5)$.
\end{theorem}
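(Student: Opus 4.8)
The plan is to recast the geometric hypothesis in the language of distance-transitive graphs and then appeal to the classification of such graphs, which rests on the Classification of Finite Simple Groups. First I would replace the polygon by its collinearity (point) graph $\Gamma$. For a thick generalised $n$-gon, $\Gamma$ is distance-regular of diameter $\lfloor n/2\rfloor$, and by the Feit--Higman theorem only $n\in\{3,4,6,8\}$ occur, so $\Gamma$ has diameter $1,2,3$ or $4$. A group $G\leq\Aut(\mathcal G)$ that is transitive on the ordered pairs of points at each fixed distance is precisely a distance-transitive automorphism group of $\Gamma$; equivalently $G$ acts on the points as a transitive group of permutation rank $2,3,4$ or $5$. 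So the theorem is equivalent to classifying the distance-transitive graphs that can arise as point graphs of generalised polygons, and any bipartite or antipodal (imprimitive) instances are reduced to the primitive case by the standard halved-graph and antipodal-quotient constructions.

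Next I would invoke the reduction theorems for finite distance-transitive graphs (Praeger--Saxl--Yokoyama and the subsequent programme), which use CFSG to show that, once the Hamming- and product-type families are set aside, a primitive distance-transitive group has a unique minimal normal subgroup that is either elementary abelian (the affine case) or a non-abelian simple group $T$ (the almost simple case). The argument then splits along the O'Nan--Scott type and along $n$. In the almost simple case one runs through the families---alternating, classical, exceptional and sporadic---and asks which of their low-rank primitive actions carry a generalised-polygon geometry; the natural actions of the Lie type groups $\PSL(3,q)$, $\PSp(4,q)$, $\PSU(4,q)$, $\PSU(5,q)$, $G_2(q)$, ${}^3D_4(q^3)$ and ${}^2F_4(2^{2k+1})$ reproduce exactly the classical and dual classical polygons. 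For the smallest diameters one can lean on sharper classical input: when $n=3$ the point graph is complete and distance-transitivity is $2$-transitivity, so the Ostrom--Wagner theorem forces the plane to be Desarguesian; when $n=4$ the point graph is strongly regular and $G$ is a rank-$3$ group, to which the classification of finite rank-$3$ groups applies.

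The decisive step is the bookkeeping showing that nothing survives outside this list except the single quadrangle $\GQ(3,5)$. For each admissible socle $T$ and each rank one must either recognise the natural Lie geometry as classical or dual classical, or derive a contradiction with the polygon axioms---typically by testing the intersection numbers of $\Gamma$ against the Feit--Higman integrality and divisibility constraints on the order $(s,t)$. The only configuration that is not of Lie type and yet passes every test is the unique $\GQ(3,5)$, which therefore appears as a genuine exception. The hard part, where essentially all the content lies, is exactly this exhaustive march through the simple groups: it is not a short argument but a CFSG-dependent classification, and the real obstacle is treating the almost simple families uniformly while still isolating the lone sporadic-looking exception. The one simplification is that the diameters are at most $4$, so the permutation ranks, and hence the list of candidate actions, stay bounded.
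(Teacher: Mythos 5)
The paper does not actually prove this statement: it is quoted, with attribution, from Buekenhout and Van Maldeghem \cite{Buekenhout:1994zp}, so there is no internal proof to compare against. Your outline --- passing to the distance-transitive collinearity graph of diameter $\lfloor n/2\rfloor$, restricting $n$ by Feit--Higman, and then invoking Ostrom--Wagner for planes, the rank-$3$ classification for quadrangles, and the CFSG-based affine/almost-simple reduction for hexagons and octagons, with $\GQ(3,5)$ surviving as the lone exception --- is essentially the strategy of that cited source, so the proposal takes the same route as the actual proof.
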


The classical and dual classical polygons have automorphism groups which are point and line-primitive, flag-transitive\footnote{That is, acting transitively on incident point-line pairs.} as well as point and line-distance-transitive. On the other hand, $\GQ(3,5)$ is point-primitive and flag-transitive, but not line-primitive. For generalised triangles/projective planes it is conjectured (see e.g., \cite{KantorPrimitivePlanes}) that a point-primitive automorphism group implies that the projective plane is Desarguesian. It is known that a counterexample would have to have a small solvable group for its automorphism group.
For generalised hexagons and octagons all classical examples have point-primitive automorphism groups and it is expected that no other hexagons or octagons exist. 
In \cite{Buekenhout:1994zp}, the authors say that they anticipated that their work would stimulate research into classifying all of the polygons obeying these weaker conditions, and this has been so. For example, some have studied what the automorphism group can be when it acts flag-transitively and point and line-primitively:

\begin{theorem}[Schneider, Van Maldeghem \cite{Schneider:2008lr}]
If $G$ acts point and line-primitively and flag-transitively on a (thick) generalised hexagon or octagon then $G$ is almost simple of Lie type.
\end{theorem}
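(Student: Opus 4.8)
The plan is to run the O'Nan--Scott theorem on the point-primitive action of $G$, eliminate every type except the almost simple one, and then within the almost simple case rule out the alternating and sporadic socles, leaving only Lie type. First I would record the arithmetic of the two families. By Feit--Higman a thick generalised hexagon of order $(s,t)$ has $v=(1+s)(1+st+s^2t^2)$ points and $b=(1+t)(1+st+s^2t^2)$ lines with $st$ a perfect square, while a thick octagon has $v=(1+s)(1+st)(1+s^2t^2)$ points and $b=(1+t)(1+st)(1+s^2t^2)$ lines with $2st$ a perfect square. Point-primitivity gives $v=[G:G_p]$ and $v\mid|G|$, line-primitivity gives $b=[G:G_L]$, and flag-transitivity supplies the crucial link $v(1+t)=b(1+s)=(\text{number of flags})$ dividing $|G|$, together with the constraint that the point-line stabiliser is a flag stabiliser. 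I would also carry along the integrality conditions coming from the eigenvalues of the collinearity graph, which force strong divisibility relations between $s$ and $t$ (and indeed are the source of the perfect-square conditions above).

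With these in hand, the affine, diagonal, product and twisted-wreath types should fall to number theory: in each of these the degree $v$ is either a prime power (affine) or a proper perfect power $v=m^{k}$ with $k\ge 2$ (holomorph, simple and compound diagonal, product action, twisted wreath). So the heart of this step is a lemma that the point-count of a thick hexagon or octagon is never such a power for admissible parameters. For hexagons one computes $\gcd(1+s,\,1+st+s^2t^2)=\gcd(1+s,\,t^2-t+1)$, which is too small to let the two factors be powers of a single prime or share the common radical demanded by $v=m^{k}$; the octagon factorisation into three near-coprime pieces is treated the same way, and the bounds $s\le t^{3}$, $t\le s^{3}$ (and their octagon analogues) close off the finitely many small exceptions. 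This disposes of the affine case and all the product-flavoured cases at once.

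In the almost simple case I would split off the alternating and sporadic socles. For a socle $A_n$ the primitive actions are essentially known (on $k$-subsets, on partitions, and via the product-action embeddings), so I would run each family of degrees against the hexagon and octagon point-counts, using the eigenvalue-integrality divisibility constraints on $(s,t)$ together with flag-transitivity to kill them uniformly in $n$, handling small $n$ by hand. For a sporadic socle $T$ this is a finite computation: for each sporadic group and each maximal subgroup $M$ I would form the candidate $v=[T:M]$ (and its almost simple extensions), test whether any integer pair $(s,t)$ obeying the Feit--Higman and Haemers--Roos constraints realises $v$ as a hexagon or octagon point-count, and then check line-primitivity and flag-transitivity against the subgroup and character-table data. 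None should survive, leaving $G$ almost simple of Lie type.

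The main obstacle is the alternating case. The product, diagonal and twisted-wreath eliminations are clean once the ``no nontrivial perfect power'' lemma is established, and the sporadic case, though laborious, is a bounded search. But $A_n$ and $S_n$ carry a rich and infinite supply of primitive actions, and matching their degrees to admissible polygon parameters requires pushing the eigenvalue-integrality and flag-transitivity constraints hard enough to rule them all out at once rather than case by case. The second delicate point is that all three hypotheses must genuinely be used: it is flag-transitivity that ties the point action to the line action and pins down the stabiliser, and without it many of the product-type and alternating configurations would survive the purely numerical tests.
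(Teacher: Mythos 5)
You should first note that the paper you are working from does not prove this statement at all: it is quoted as background, and the proof lives entirely in Schneider and Van Maldeghem \cite{Schneider:2008lr}. At the top level your outline (O'Nan--Scott reduction on the point action, then elimination of alternating and sporadic socles) matches the shape of that proof, but the way you execute the reduction has a genuine gap that would make the argument fail.

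The central flaw is the claim that every non-almost-simple O'Nan--Scott type has degree either a prime power or a proper perfect power $m^k$ with $k\ge 2$. This is false for the holomorph-of-simple-group type and for simple diagonal type with exactly two socle factors: in both of these the degree is $|T|$ for a nonabelian simple group $T$, which is not a perfect power in any form your ``no perfect power'' lemma could exploit. These types cannot be dispatched by degree arithmetic at all; in the literature they are handled by structural arguments about the socle and the point stabiliser (regular normal subgroups, the diagonal shape of the stabiliser), and the difficulty is real --- for generalised quadrangles the holomorph-of-simple-group case needed a separate paper \cite{Bamberg:2017ab}, and the product-action case another \cite{Bamberg2018Quadrangles}. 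A second, independent error is that your coprimality argument points the wrong way: if $v=AB$ with $\gcd(A,B)=1$ and $v=m^k$, then $A$ and $B$ must each be $k$-th powers, so a small gcd does not obstruct $v$ from being a perfect power; it merely reduces the problem to showing that $1+s$ and $1+st+s^2t^2$ (or the three octagon factors) cannot simultaneously be $k$-th powers, a Diophantine question your sketch never engages. Finally, you concede that the alternating-socle case --- the part of the proof where infinitely many primitive actions must be excluded uniformly --- is the main obstacle and leave it unresolved, and the sporadic case is deferred to an unperformed computation (in \cite{Schneider:2008lr} this step leans on the earlier point-transitive classification of Buekenhout and Van Maldeghem \cite{Buekenhout:1993bf}). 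As it stands, the proposal is a plausible plan for the product-type and affine cases only, not a proof of the theorem.
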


This result was preceded by Buekenhout and Van Maldeghem's \cite{Buekenhout:1993bf}
 classification of the \textit{Atlas-groups} that act point-transitively on a finite generalised hexagon or octagon.
This includes all of the sporadic simple groups, and the conclusion is that only the classical examples (of Atlas-groups) arise,
and no almost simple group with sporadic socle can act point-transitively on a finite generalised hexagon or octagon.
An analogue for generalised quadrangles appears in \cite{Bamberg:2012yf} where it was shown computationally that if $G$ acts primitively on the points and lines of a generalised quadrangle, then the socle of $G$ is not a sporadic simple group.
Moreover, we have the following result:

\begin{theorem}[Bamberg, Giudici, Morris, Royle, Spiga \cite{Bamberg:2012yf}]
If $G$ acts point and line-primitively on a generalised quadrangle then $G$ is almost simple. If $G$ is also flag-transitive, then $G$ is almost simple of Lie type.
\end{theorem}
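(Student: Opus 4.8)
The natural strategy is to run through the O'Nan--Scott theorem and eliminate every type of primitive group on the point set except the almost simple one. I would begin by recording the elementary numerology of a thick generalised quadrangle of order $(s,t)$: the collinearity graph is strongly regular on $v=(s+1)(st+1)$ points with valency $s(t+1)$, the dual statement holds for the $b=(t+1)(st+1)$ lines, Higman's inequalities give $s\leq t^2$ and $t\leq s^2$, and integrality of the eigenvalue multiplicities forces $s+t\mid st(s+1)(t+1)$. Since a point-primitive group has $\soc(G)$ transitive on the $v$ points and a line-primitive group has $\soc(G)$ transitive on the $b$ lines, these arithmetic constraints are the levers I would use against each O'Nan--Scott class.

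For the types with nonabelian socle $\soc(G)=T^k$ and $k\geq 2$ (simple and compound diagonal, product action, holomorph and twisted wreath), the degree $v$ is highly structured --- a perfect power $m^k$ in the product-action cases and $|T|^{k-1}$ in the diagonal cases --- and the point stabiliser is correspondingly rigid. I would confront these shapes of $v$ with the factorisation $v=(s+1)(st+1)$, noting that $\gcd(s+1,st+1)=\gcd(s+1,t-1)$, so the two factors are nearly coprime; reconciling this with $v$ being a large perfect power built from simple-group orders, while also respecting Higman's bound and the multiplicity condition, should be comparatively routine and eliminate all of these cases.

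The main obstacle is the affine (HA) type, where $\soc(G)=V\cong C_p^{\,d}$ is regular, so $v=(s+1)(st+1)=p^d$ and $G=V\rtimes G_0$ with $G_0\leq\GL(d,p)$ irreducible. Here no single equation collapses immediately: since every divisor of $p^d$ is a power of $p$, one gets $s+1=p^a$ and $st+1=p^b$, whence $t=(p^b-1)/(p^a-1)$, and parameter families such as $(s,t)=(p^a-1,p^a+1)$ survive Higman's inequality. Thus the real content is to show that an elementary abelian group cannot act regularly on the points of a thick generalised quadrangle. I would feed the constraints $p\mid s+1$ and $p\mid t-1$ back through the multiplicity-divisibility condition to bound the parameters, and then rule out the surviving cases by a geometric argument on how the regular group $V$ partitions the line set. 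This is where I expect the delicate work to lie.

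Finally, for the refinement under flag-transitivity, once $G$ is known to be almost simple with socle $T$ a nonabelian simple group, it remains to exclude $T$ alternating and $T$ sporadic. Flag-transitivity forces $|G|$ to be divisible by the flag count $(st+1)(s+1)(t+1)$ and ties the point- and line-stabilisers together, yielding strong order and index conditions on $T$; combining these with the known maximal-subgroup data excludes $T=A_n$, while the sporadic socles are eliminated by an explicit, largely computational analysis of the primitive actions of the sporadic groups --- precisely the analysis that the present paper carries out in full. What remains is then exactly the almost simple groups of Lie type.
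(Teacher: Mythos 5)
This theorem is not proved in the paper at all: it is quoted verbatim from Bamberg--Giudici--Morris--Royle--Spiga \cite{Bamberg:2012yf}, so your proposal can only be measured against that paper's argument, whose broad O'Nan--Scott frame you do share. The fatal gap is in your affine case. You reduce it to showing that ``an elementary abelian group cannot act regularly on the points of a thick generalised quadrangle'', and that statement is \emph{false}. The quadrangle $\GQ(3,5)$ has $(3+1)(3\cdot 5+1)=64=2^{6}$ points and a point-primitive automorphism group of Holomorph Affine type --- the present paper analyses $\GQ(3,5)$ and $\LSce$ as exactly such examples --- so its translation subgroup is an elementary abelian $2$-group acting regularly on the points of a thick generalised quadrangle; indeed your own ``surviving'' family $(s,t)=(p^{a}-1,p^{a}+1)$ is realised by genuine quadrangles. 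Consequently no argument confined to the point action can close the affine case: you must use line-primitivity, which your sketch never invokes there. One correct route is: since $G$ is line-primitive, the socle $V$ is line-transitive; for a line $L$ the stabiliser $V_{L}$ is normal in the abelian group $V$, hence stabilises every line in the $V$-orbit of $L$, i.e.\ every line, hence fixes every point (each point is the intersection of two lines), hence $V_{L}=1$ because $V$ is semiregular on points. Then $|V|$ equals both the number of points and the number of lines, forcing $s=t$, and $(s+1)(s^{2}+1)=p^{d}$ is impossible for $s\geq 2$: both factors would be powers of $p$, yet $\gcd(s+1,s^{2}+1)$ divides $2$, so $p=2$, $s$ is odd, and then $s^{2}+1\equiv 2 \pmod 4$ forces $s^{2}+1=2$.

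A secondary concern: your claim that the diagonal, product-action, holomorph and twisted-wreath types fall to ``comparatively routine'' arithmetic on the shape of $v=(s+1)(st+1)$ is over-optimistic. Nothing in arithmetic alone prevents a product of two nearly coprime factors from being a perfect $k$-th power or from equalling $|T|^{k-1}$; the cited paper eliminates these types by genuine group-theoretic arguments (normal-subgroup orbit counting, fixed-point and subdegree analysis, again exploiting primitivity on \emph{both} points and lines), not by numerology. The difficulty is underlined by the fact that, for point-primitivity alone, the product-action situation required the separate paper \cite{Bamberg2018Quadrangles}, and the Holomorph Compound type is the only O'Nan--Scott type excluded to date in that setting. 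Your final paragraph on flag-transitivity is acceptable in outline --- the sporadic socles are indeed removed computationally, as in \cite{Bamberg:2012yf} and, more strongly, by Theorem \ref{maintheorem} of this paper --- but excluding alternating socles likewise needs a substantive argument, not merely ``maximal-subgroup data''.
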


%
%
%
%

Only for generalised quadrangles is it expected for there to be non-classical examples with point-primitive automorphism groups, and it is also the case where the least progress has been made. The strongest result so far is that 
if $G$ acts point-primitively as automorphisms of a generalised quadrangle, then $G$ is not of \textit{Holomorph Compound} O'Nan-Scott type \cite{Bamberg2018Quadrangles}.
On the basis of the known examples it is expected that additionally all except the `Holomorph Affine' and `Almost Simple' types can be eliminated. 
Much work has already been done in classifying what happens in the affine case: the (thick) generalised quadrangles that admit an automorphism group that is line-transitive and point-primitive of Holomorph Affine type, are known \cite{PsuedoHyperovals}. The almost simple case requires much more work.

The contribution of the current paper is to describe an algorithmic way to determine explicitly whether individual groups can act point-primitively on some generalised quadrangle.
To this end, and to demonstrate the effectiveness of the method that was developed, the algorithm was implemented in the \textsf{GAP} computer algebra software \cite{GAP4}, and used to extend the results of \cite{Buekenhout:1993bf} and \cite{Bamberg:2012yf}:

\begin{theorem}\label{maintheorem}
No almost simple sporadic group can act primitively on the points of any generalised quadrangle.
\end{theorem}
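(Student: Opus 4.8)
The plan is to turn Theorem~\ref{maintheorem} into a finite verification, filtering every point-primitive action of an almost simple sporadic group through a chain of necessary conditions of increasing cost. Since a primitive action is the action on the cosets of a maximal subgroup, the candidate point sets are exactly the coset spaces $G/M$ with $M$ maximal in $G$; these, together with the associated permutation character data, are available from the character-table library and the lists of maximal subgroups in \textsf{GAP} \cite{GAP4} and the \AtlasRep\ package. If such an action is that of a group of automorphisms on the points of a $\GQ(s,t)$, then the degree satisfies $[G:M] = (s+1)(st+1)$, and the collinearity graph is strongly regular with parameters $(v,k,\lambda,\mu) = \bigl((s+1)(st+1),\, s(t+1),\, s-1,\, t+1\bigr)$ and non-principal eigenvalues $s-1$ and $-(t+1)$. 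These two facts are the backbone of every test.

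First I would apply a purely Diophantine filter. For each degree $n = [G:M]$, solve $n = (s+1)(st+1)$ for integers $s,t \geq 2$ (thickness), subject to Higman's inequalities $t \leq s^{2}$ and $s \leq t^{2}$, the divisibility condition $(s+t) \mid st(s+1)(t+1)$, and the requirement that the two eigenvalue multiplicities of the strongly regular graph above be positive integers. Most degrees admit no feasible $(s,t)$ and are discarded immediately.

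For the comparatively few surviving triples $(G,M,(s,t))$, the next step is an orbital analysis. Because collinearity is a symmetric, $G$-invariant relation, the collinearity graph must be a union of orbitals of $(G, G/M)$; in particular its valency $k = s(t+1)$ must be a sum of subdegrees of the point stabiliser $M$, and the corresponding union of orbitals must have precisely the strongly regular spectrum $\{k,\, s-1,\, -(t+1)\}$ with the right multiplicities. This can be decided at the level of the association scheme, using only the subdegrees and intersection numbers, without ever writing down the permutation representation.

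The final and subtlest step is to decide, for any candidate collinearity graph that survives, whether a $G$-invariant family of \emph{lines} exists: a set of $(s+1)$-cliques covering each edge exactly once and satisfying the quadrangle axiom. Since $G$ is point-primitive, such lines fall into $G$-orbits, and the $t+1$ lines through a fixed point must be permuted by $M$; this both constrains the possible orbit lengths and links the search to the theory of line-orbits, $m$-ovoids and hemisystems underlying the conjecture in the abstract. I expect this step, together with the sheer scale of the largest sporadic groups, to be the main obstacle: for the likes of the Baby Monster and the Monster the permutation degrees are far too large for any direct construction of the graph or brute-force clique search, so the checks in all three steps must be engineered to run on the character-theoretic data (permutation characters and subdegrees) alone, and must be sharp enough to annihilate every remaining case.
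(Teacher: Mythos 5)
Your skeleton is the same as the paper's: candidate point sets are the coset spaces $G/M$ for $M$ maximal, a Diophantine filter on $(s,t)$ (your divisibility condition is equivalent to the paper's, since $st(s+1)(t+1)\equiv st(st+1) \pmod{s+t}$), then a subdegree/orbital analysis to find a putative point-neighbourhood, then a check that a genuine quadrangle sits on the resulting graph. However, there is a genuine gap in the middle of your pipeline: you have no test that exploits the \emph{lines} before a candidate collinearity graph and its subdegrees are available, and this is precisely the paper's key weapon (Lemma~\ref{LOL}). That lemma says each line-orbit of a point-transitive group is a $k$-cover, hence has size $k(st+1)$; so every line stabiliser has index divisible by $st+1$, and since each line stabiliser lies in some maximal subgroup $M'$, the line set of size $(t+1)(st+1)$ must be partitioned into parts that are multiples of $\mathrm{lcm}\bigl(\lvert G:M'\rvert,\, st+1\bigr)$ --- a knapsack test that needs nothing but the list of maximal subgroup indices. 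You do gesture at line-orbits, but only in your final step, \emph{after} a graph survives; placed there it never fires in the cases where it is indispensable. The paper states explicitly that without this test $J_1$ and $Fi_{23}$ could not be eliminated. $J_1$ is small enough that your orbital analysis could plausibly rescue it by brute force, but $Fi_{23}$ is fatal for your plan: the surviving case there is $(s,t)=(2991,689)$ on roughly $6\times 10^{9}$ points, and your second step presupposes the subdegrees of that action, which are not computable in practice (the paper could not even obtain the subdegrees for the far smaller $HN.2$ action and had to fall back on published data). Your hope that everything can be ``engineered to run on the character-theoretic data alone'' is exactly the hard part, and you give no mechanism for it; Lemma~\ref{LOL} is that mechanism.

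A second, smaller omission: for the extensions $Fi_{22}.2$, $Fi_{24}'.2$ and $HN.2$ the paper uses a reduction you never mention. If $M$ is a non-novelty maximal subgroup of the almost simple group $G$ (i.e.\ $T\cap M$ is maximal in the socle $T$), then a point-primitive action of $G$ with stabiliser $M$ restricts to a point-primitive action of $T$ with stabiliser $T\cap M$, so once $T$ has been eliminated this case of $G$ dies for free. Without this descent, the index-two overgroups force you into computations that defeated the authors themselves. (For the Baby Monster and the Monster, by contrast, your worry is misplaced: the paper shows they already die at your first, Diophantine, step, since no feasible $(s,t)$ exists for any maximal subgroup index.)
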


Finally we conclude with a conjecture of our own, which we hope will stimulate further inquiry.
%

\begin{conjecture*} 
Suppose that $G$ is a group which acts point-primitively on a generalised quadrangle $\mathcal{Q}$. Then $G$ either acts transitively on the lines of $\mathcal{Q}$, or has only two line-orbits, both of equal size. 
\end{conjecture*}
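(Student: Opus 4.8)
The plan is to translate the statement into representation theory and reduce it to a question about a single eigenspace. Let $\mathcal{Q}$ have order $(s,t)$, so that it has $v=(s+1)(st+1)$ points and $b=(t+1)(st+1)$ lines, and let $N$ denote the point-by-line incidence matrix, regarded as a $G$-module homomorphism $N\colon \mathbb{C}[L]\to\mathbb{C}[P]$ between the permutation modules $\mathbb{C}[L]$ on lines and $\mathbb{C}[P]$ on points. The collinearity graph of $\mathcal{Q}$ is strongly regular with eigenvalues $s(t+1)$, $s-1$ and $-(t+1)$, and since $NN^{\mathsf{T}}=(t+1)I+A$ (where $A$ is its adjacency matrix) the matrix $NN^{\mathsf{T}}$ has eigenvalues $(s+1)(t+1)$, $s+t$ and $0$. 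Writing $\mathbb{C}[P]=\langle\mathbf{1}\rangle\oplus W_f\oplus W_g$ for the decomposition into the $s(t+1)$-, $(s-1)$- and $(-(t+1))$-eigenspaces of $A$, and $\mathbb{C}[L]=\langle\mathbf{1}\rangle\oplus W'_{f'}\oplus W'_{g'}$ for the analogous decomposition coming from the dual (a quadrangle of order $(t,s)$), the image of $N$ is $\langle\mathbf{1}\rangle\oplus W_f$ and the kernel of $N$ is exactly $W'_{g'}$. Hence $N$ induces a $G$-isomorphism $W'_{f'}\cong W_f$.

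First I would count line-orbits using this decomposition. The number of line-orbits equals $\dim\mathrm{Fix}_{\mathbb{C}[L]}(G)=1+\dim\mathrm{Fix}(W'_{f'})+\dim\mathrm{Fix}(W'_{g'})$. Point-primitivity (indeed mere point-transitivity) forces $\dim\mathrm{Fix}_{\mathbb{C}[P]}(G)=1$, so the trivial module does not occur in $W_f$ or $W_g$; by the isomorphism $W'_{f'}\cong W_f$ it does not occur in $W'_{f'}$ either. Therefore
\[
\#\{\text{line-orbits}\}=1+\dim\mathrm{Fix}(W'_{g'}),
\]
where $W'_{g'}\subseteq\mathbb{C}[L]$ is the $(-(s+1))$-eigenspace of the concurrency graph. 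The whole conjecture is thus equivalent to two assertions: (a) the trivial $G$-module occurs at most once in $W'_{g'}$; and (b) if it occurs, the two orbits have equal size.

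Next I would reinterpret (a) and (b) geometrically, which is where the keywords \emph{hemisystems} and \emph{m-ovoids} enter. If there are two line-orbits $\mathcal{L}_1,\mathcal{L}_2$, then the fixed vector orthogonal to $\mathbf{1}$ is $u=|\mathcal{L}_2|\,\mathbf{1}_{\mathcal{L}_1}-|\mathcal{L}_1|\,\mathbf{1}_{\mathcal{L}_2}\in W'_{g'}$, so $\mathbf{1}_{\mathcal{L}_1}=\tfrac{|\mathcal{L}_1|}{b}\mathbf{1}+\tfrac{1}{b}u$ has its non-trivial part in the negative eigenspace; this is exactly the condition that $\mathcal{L}_1$ is a line $m$-ovoid, that is, every point lies on a constant number $m$ of its lines. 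Equal orbit sizes is then precisely the condition $m=(t+1)/2$, so that $\mathcal{L}_1$ is a \emph{hemisystem}. In these terms the conjecture asserts that a point-primitive quadrangle either is line-transitive, or its lines split into a hemisystem and its complement, each a single orbit.

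The main obstacle is controlling $W'_{g'}$. Point-primitivity constrains the point module $\mathbb{C}[P]$ and, through $W'_{f'}\cong W_f$, the positive part of the line module, but $W'_{g'}$ is a $G$-module that need not appear on the point side at all, so the hypothesis gives no direct purchase on it; this is presumably why the statement remains conjectural. A plausible line of attack is the elementary observation that every line-orbit meets the pencil of $t+1$ lines through a fixed point $p$ (transport a line of the orbit onto $p$ using point-transitivity), whence the number of line-orbits is at most the number of orbits of the stabiliser $H$ on that pencil; one would then try to bound this by two via an interlacing or integrality argument on the multiplicities $\tfrac{st(s+1)(t+1)}{s+t}$ and $\tfrac{t^2(st+1)}{s+t}$. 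Establishing the balance condition (b) looks harder still, since a single $G$-orbit that is a line $m$-ovoid need not a priori be a hemisystem, so forcing $m=(t+1)/2$ will likely require a parity or divisibility argument peculiar to orbits, or, failing a uniform proof, a reduction through the Classification of Finite Simple Groups analogous to the O'Nan--Scott analysis already invoked for the point action.
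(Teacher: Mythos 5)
The statement you were given is the paper's Hemisystem Conjecture, and the first thing to say is that the paper does \emph{not} prove it: it is offered as an open problem, supported only by (i) an exhaustive computational check of the line-orbits of point-primitive subgroups on all known examples (the classical quadrangles via the classification of point-transitive subgroups, together with $\GQ(3,5)$ and $\LSce$), and (ii) a heuristic appeal to Thas's results (Theorem \ref{placeholder}), which say that a $k$-cover inducing sufficiently regular subgraphs is forced, away from trivial cases, to be a hemisystem. So there is no ``paper's proof'' to match yours against, and your closing admission that the statement remains conjectural is the correct assessment, not a defect of presentation. What you have actually produced is a reformulation, and your reformulation is mathematically sound: $NN^{\mathsf{T}}=(t+1)I+A$ does have spectrum $\{(s+1)(t+1),\,s+t,\,0\}$; the image of $N$ is $\langle\mathbf{1}\rangle\oplus W_f$ and $\ker N$ is the $-(s+1)$-eigenspace $W'_{g'}$ of the concurrency graph, giving the $G$-isomorphism $W'_{f'}\cong W_f$; point-transitivity then yields $\#\{\text{line-orbits}\}=1+\dim\mathrm{Fix}(W'_{g'})$; and your dictionary (fixed vectors in $W'_{g'}$ correspond to $k$-covers, equal orbit sizes correspond to $k=(t+1)/2$) is exactly right. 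Note that the first half of that dictionary is precisely the paper's Lemma \ref{LOL} --- line-orbits of a point-transitive group are $k$-covers --- in spectral clothing, and the second half is the paper's observation that two equal orbits form a hemisystem.

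Where you differ from the paper is in the kind of evidence and in where the obstruction is located. The paper's support is empirical plus Thas's regularity theorem, which would prove the conjecture if one could show that line-orbits of point-primitive groups induce strongly regular subgraphs --- something nobody knows how to do, and which in any case overshoots (it would force $t=s^2$, contradicted by $\GQ(3,5)$ and $\LSce$, as the paper notes). Your framework isolates the difficulty more sharply: point-primitivity constrains $\mathbb{C}[P]$, hence $W'_{f'}$, but $W'_{g'}=\ker N$ is invisible from the point side, so the hypothesis gives no direct grip on $\dim\mathrm{Fix}(W'_{g'})$; your items (a) and (b) are verbatim the open content of the conjecture. Your proposed attack via the pencil of $t+1$ lines through a point $p$ starts from a correct bound (every line-orbit is a $k$-cover with $k\geq 1$, so meets every pencil, whence the number of line-orbits is at most the number of $G_p$-orbits on the pencil), but the paper's Appendix \ref{appendix:lineorbits} shows point-transitive imprimitive groups with five or more line-orbits, so any argument capping the count at two must use primitivity in an essential way that neither you nor the paper supplies, and the balance condition (b) is untouched. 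In short: your reduction is correct, genuinely different from and complementary to the paper's discussion, but it closes no part of the gap --- which is consistent with the statement being, in the paper as for you, a conjecture.
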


In the case where there are two line-orbits, each orbit is a \emph{hemisystem} of the lines of $\mathcal{Q}$: half of the lines at each point are in the given orbit. A consequence of this conjecture, if true, is that if $\mathcal{Q}$ has order $(s,t)$, and $t$ is even, then $G$ is line-transitive (because
the number $(t+1)(st+1)$ of lines would be odd). So it would imply that if $t$ is even, then $G$ is not of Holomorph of Simple Group type, by
\cite[Theorem 1.1]{Bamberg:2017ab}.

\section{Finite generalised quadrangles} \label{2}

Throughout this paper, $\mathcal{Q}$ will refer to a (finite, thick) generalised quadrangle with point set $\mathcal{P}$ and line set $\mathcal{L}$. Generalised quadrangles are generalised $n$-gons with $n=4$. That is, they are point-line incidence geometries whose incidence graph has diameter $4$ and girth $8$. However, it is often useful to restate this definition in terms of a more geometrically meaningful, but equivalent, pair of axioms:

\begin{enumerate}
\item Any two points lie on at most one line, and 
\item Given a point $p$ and a line $L$ not incident, there is a unique point on $L$ collinear with $p$.
\end{enumerate}

The dual of an incidence geometry $\mathcal{G}$ is another geometry $\mathcal{G}^{D}$ with the points and lines swapped around, but the same pattern of incidence. The definition of a generalised quadrangle is duality invariant: the dual of a generalised quadrangle is another generalised quadrangle.

A generalised quadrangle $\mathcal{Q}$ is said to have an order $(s,t)$ if every line in it is incident with $s+1$ points and each point in it is incident with $t+1$ lines. The following lemma summarises the basic information about the order of a generalised quadrangle:

\begin{lemma}[{\cite[1.2.1, 1.2.2, 1.2.3]{PayneThas:2009}}]\label{basiclemma}
Let $\mathcal{Q}$ be a thick generalised quadrangle. Then:
\begin{itemize}
\item $\mathcal{Q}$ has an order $(s,t)$ for some $s,t \geq 2$,
\item $\abs{\mathcal{P}}=(s+1)(st+1)$, $\abs{\mathcal{L}}=(t+1)(st+1)$,
\item $s\leq t^{2}$, $t\leq s^{2}$,
\item $s+t$ divides $st(st+1)$.
\end{itemize}
\end{lemma}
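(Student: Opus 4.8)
The plan is to extract all four items from the two incidence axioms together with thickness, working from the softest statement to the most arithmetically delicate one. First I would show that $\mathcal{Q}$ has an order. The key tool is axiom~(2): if $L$ and $M$ are non-concurrent lines, then sending a point $x$ of $L$ to the unique point of $M$ collinear with $x$ defines a map $L\to M$, injective by the uniqueness clause of axiom~(2) and bijective by symmetry; hence non-concurrent lines carry equally many points. For two concurrent lines $L,M$ meeting at $z$ one interposes a third line concurrent with neither, whose existence follows from thickness: a point $p$ non-collinear with $z$ exists, it lies off both $L$ and $M$, at most two of its $t+1\geq 3$ lines meet $L\cup M$, so some line through $p$ avoids both. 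This gives a constant line size $s+1$; dualising (the dual is again a generalised quadrangle) gives a constant point degree $t+1$, and thickness is exactly $s,t\geq 2$.

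Next I would count. Fixing a point $p$, the points collinear with $p$ (including $p$) number $1+s(t+1)$, since the $t+1$ lines through $p$ meet only in $p$ by axiom~(1). To count the rest I would double-count the pairs $(q,x)$ with $q$ not collinear with $p$, with $x$ collinear with $p$, and with $q$ collinear with $x$: each such $q$ contributes $t+1$ points $x$ (one on each line through $q$, by axiom~(2)), while each of the $s(t+1)$ points $x$ contributes $st$ points $q$ (those other than $x$ on the $t$ lines through $x$ missing $p$, all non-collinear with $p$ because $\mathcal{Q}$ has no triangles). Equating the two counts gives $s^2t$ non-collinear points, whence $\abs{\mathcal{P}}=1+s(t+1)+s^2t=(s+1)(st+1)$, and duality yields $\abs{\mathcal{L}}=(t+1)(st+1)$.

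The last two items I would read off the collinearity graph $\Gamma$ on $\mathcal{P}$. The counts above show $\Gamma$ is strongly regular: it has valency $s(t+1)$, adjacent points have $s-1$ common neighbours, and non-adjacent points have exactly $t+1$ (one on each line through either point). Its non-principal eigenvalues are the roots of $x^{2}-(s-t-2)x-(s-1)(t+1)=0$, namely $s-1$ and $-(t+1)$, and the discriminant collapses to $(s+t)^{2}$. Writing $g$ for the multiplicity of $-(t+1)$, the standard trace relations give $g=s^{2}(st+1)/(s+t)$; integrality of $g$ forces $(s+t)\mid s^{2}(st+1)$, and since $s\equiv -t\pmod{s+t}$ one has $s^{2}(st+1)\equiv -st(st+1)\pmod{s+t}$, so this is equivalent to $(s+t)\mid st(st+1)$, the final bullet.

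For the inequalities $s\leq t^{2}$ and $t\leq s^{2}$ — dual to one another, so one suffices — mere non-negativity of the multiplicities is not enough, and this is the step I expect to be the main obstacle: it is Higman's inequality, whose content is a genuine positivity statement rather than a divisibility. I would derive $t\leq s^{2}$ by a variance argument: fixing a point $x$ and ranging over the $s^{2}t$ points non-collinear with it, count common neighbours inside $x^{\perp}$ in two ways, then apply Cauchy--Schwarz (equivalently, the non-negativity of a sum of squares) to the resulting multiset. The bound emerges precisely from that positivity-of-a-quadratic-form step, which is where the real arithmetic force of these constraints resides.
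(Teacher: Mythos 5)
The paper itself offers no proof of this lemma: it is quoted directly from Payne--Thas, so your attempt can only be compared against the standard textbook arguments. On three of the four items your outline matches them and is correct: the perspectivity map between non-concurrent lines together with the thickness argument manufacturing a line disjoint from two concurrent ones; the double count giving $s^{2}t$ points non-collinear with a fixed point and hence $\abs{\mathcal{P}}=(s+1)(st+1)$; and the eigenvalue computation for the collinearity graph, where $g=s^{2}(st+1)/(s+t)$ and the reduction $s^{2}(st+1)\equiv -st(st+1)\pmod{s+t}$ correctly convert integrality of the multiplicity into the stated divisibility. These are essentially the proofs in the literature.

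The genuine gap is in item (3), exactly the step you flagged as the main obstacle, and it is not merely incomplete but set up in a way that cannot work. If you fix a single point $x$ and range $z$ over the $s^{2}t$ points non-collinear with $x$, the quantity available to you, the number of common neighbours of $x$ and $z$ (all of which lie in $x^{\perp}$), is \emph{constant}: by axiom (2) each of the $t+1$ lines through $z$ carries exactly one point collinear with $x$, and these points are distinct since two lines through $z$ meet only at $z$. So the multiset is $\{t+1,\dots,t+1\}$, Cauchy--Schwarz holds with equality, and no relation between $s$ and $t$ can emerge; there is no variance to exploit. Higman's inequality needs a second moment that genuinely varies, and for that you must fix a non-collinear \emph{pair} $x,y$ and range $z$ over the $n=s^{2}t-st-s+t$ points non-collinear with both, setting $f(z)=\abs{\{x,y,z\}^{\perp}}$, the number of centres of the triad $\{x,y,z\}$. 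Counts of the same kind as in your item (2) give $\sum_{z}f(z)=s(t^{2}-1)$ and $\sum_{z}f(z)\bigl(f(z)-1\bigr)=t(t^{2}-1)$: any two distinct centres $u,u'$ of the pair $\{x,y\}$ are non-collinear (else a triangle forms), have $t+1$ common neighbours, and all of these except $x$ and $y$ are automatically non-collinear with both $x$ and $y$. Then $n\sum_{z}f(z)^{2}\geq\bigl(\sum_{z}f(z)\bigr)^{2}$ simplifies to $t(s-1)(s^{2}-t)\geq 0$, i.e. $t\leq s^{2}$, and duality gives $s\leq t^{2}$. With the one-point setup replaced by this duad--triad setup (which is Payne--Thas 1.2.3, Higman's original argument), your outline becomes a complete proof.
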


Another important fact about generalised quadrangles is that many of the graphs associated to them, e.g., the incidence graph \cite{DistanceRegularGraphs}, must be \textit{distance-regular}. Most importantly for us is the point-collinearity graph (or dually, the line-concurrency graph). The point-collinearity graph of a quadrangle (or any point-line geometry) is the graph with the point set as vertices, and points adjacent if they are distinct and collinear in the geometry. It follows from \cite[1.2.2]{PayneThas:2009} that the point-collinearity graph of a  quadrangle must be distance-regular, or equivalently since the point-collinearity graph has diameter $2$, that the graph is \textit{strongly regular} with parameters $\left((s+1)(st+1),s(t+1),s-1,t+1\right)$\footnote{These parameters can be interpreted geometrically as: the total number of points, the number of points collinear (but not equal to) a point $p$, the number of common neighbours to a pair of collinear points $p$ and $q$ and the number of common neighbours when $p$ and $q$ are not collinear.}.
 
An automorphism of an incidence geometry is a pair $(\rho,\sigma)$ of permutations on the point and line sets respectively, which together preserve incidence. The automorphism group of the geometry is the set of all of its automorphisms. The automorphism group of a generalised quadrangle may be identified with the set of all permutations on the points which preserve collinearity, because the structure of quadrangles guarantees that every permutation on points preserving collinearity determines a unique permutation on the lines which preserves incidence.

In specific cases there is much that can be said about the automorphisms and automorphism groups of specific quadrangles. However, our work requires results that apply in complete generality, and there are rather fewer of those. The following results are the ones that we will need.

\begin{lemma}[\cite{Buekenhout:1993bf}] \label{OOE}
Let $\mathcal{Q}$ have order $(s,t)$, and $\theta$ be an automorphism of prime order $x$ greater than both $s+1$ and $t+1$. Then $\theta$ cannot fix any points, so $x$ cannot divide the size of the point-stabilisers, and must divide $\abs{\mathcal{P}}$, and dually for lines.
\end{lemma}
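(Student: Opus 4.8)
The plan is to argue by contradiction, showing that a single fixed point of $\theta$ would force $\theta$ to fix \emph{every} point, and hence to be the identity --- which is impossible since $\theta$ has prime order $x \geq 2$. So suppose $\theta$ fixes a point $p$. The crucial leverage is that $x$ exceeds both $s+1$ and $t+1$, which converts ``fixing a point'' into ``fixing an entire collinear neighbourhood''. Indeed, $\theta$ permutes the $t+1$ lines through $p$, and since $\langle\theta\rangle$ is cyclic of prime order $x$, each orbit on these lines has size $1$ or $x$; as $t+1 < x$ there is no orbit of size $x$, so all $t+1$ lines through $p$ are fixed. For each such fixed line $L$, the map $\theta$ permutes the $s+1$ points of $L$, and the same counting argument (now using $s+1 < x$) shows every point of $L$ is fixed. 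Hence every point collinear with $p$ is fixed.

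The key step is then to globalise this: since the point-collinearity graph of $\mathcal{Q}$ has diameter $2$ it is connected (equivalently, the associated strongly regular graph has $\mu = t+1 > 0$). The set $F$ of points fixed by $\theta$ is nonempty (it contains $p$) and, by the previous paragraph applied at each of its members, contains all neighbours of each of its points; thus $F$ is a union of connected components of the collinearity graph, and by connectivity $F = \mathcal{P}$. But the automorphism group of a generalised quadrangle acts faithfully on its points, so an automorphism fixing all of $\mathcal{P}$ is the identity, contradicting $\theta \neq 1$. Therefore $\theta$ fixes no point.

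The remaining assertions follow quickly. Because $\theta$ has prime order $x$ and no fixed point, every $\langle\theta\rangle$-orbit on $\mathcal{P}$ has size exactly $x$, whence $x \mid \abs{\mathcal{P}}$. If $x$ divided the order of a point-stabiliser in some group of automorphisms containing $\theta$, then Cauchy's theorem would supply an element of order $x$ fixing that point, contradicting what we just proved; so $x$ divides no point-stabiliser order. Finally, the dual statements for lines are immediate: the dual of $\mathcal{Q}$ is again a generalised quadrangle, of order $(t,s)$, and since the hypothesis $x > s+1$ and $x > t+1$ is symmetric in $s$ and $t$, the identical argument applies to the lines.

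I do not expect a serious obstacle here; the one point requiring care is the local-to-global propagation, namely confirming that the collinearity graph is genuinely connected so that the bootstrapping reaches all of $\mathcal{P}$ rather than stalling inside a proper closed subquadrangle --- which the diameter-$2$ property of $\mathcal{Q}$ guarantees.
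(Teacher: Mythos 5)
Your proof is correct. The paper itself does not prove Lemma \ref{OOE} --- it is quoted from \cite{Buekenhout:1993bf} --- and your argument (orbits of size $1$ or $x$ on the $t+1$ lines through a fixed point and on the $s+1$ points of a fixed line, propagation of the fixed set through the connected collinearity graph to force $\theta=1$, then the orbit count and Cauchy's theorem for the divisibility claims) is exactly the standard argument behind the cited result, so there is nothing further to compare.
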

%

\begin{lemma}\label{LOL}
Let $G$ be an automorphism group of a generalised quadrangle $\mathcal{Q}$ of order $(s,t)$, and suppose $G$ acts transitively on $\mathcal{P}$. Let $L^{G}$ be an orbit of $G$ on $\mathcal{L}$. Then there is a number $1 \leq k \leq t+1$ such that each point is incident with exactly $k$ lines from $L^{G}$. Consequently, $L^{G}$ has $k(st+1)$ lines in it. If $G$ acts primitively on $\mathcal{P}$ then $k \neq 1,t$.
\end{lemma}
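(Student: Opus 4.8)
The plan is to exploit the fact that $L^{G}$, being a $G$-orbit, is a $G$-invariant set of lines, and to combine this with the transitivity (respectively primitivity) of $G$ on the points. First I would show that the number of lines of $L^{G}$ through a point is independent of the point. Fix a point $p$ and write $k(p)$ for the number of lines of $L^{G}$ incident with $p$. Given any other point $p'$, point-transitivity supplies $g\in G$ with $p^{g}=p'$; since $g$ permutes $\mathcal{L}$ and fixes the set $L^{G}$ setwise, it restricts to a bijection between the lines of $L^{G}$ through $p$ and those through $p'$. Hence $k(p)=k(p')$, so the common value $k$ is well-defined. As every point lies on exactly $t+1$ lines we get $k\leq t+1$, and since $L^{G}$ is non-empty there is some $L\in L^{G}$; each of its points lies on at least the line $L$ of $L^{G}$, forcing $k\geq 1$.

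Next, the cardinality of $L^{G}$ comes from a standard double count of the flags $(p,L)$ with $L\in L^{G}$. Summing over points gives $\abs{\mathcal{P}}\,k=(s+1)(st+1)k$ such flags, while summing over lines gives $(s+1)\abs{L^{G}}$, since each line carries $s+1$ points; equating and cancelling $s+1$ yields $\abs{L^{G}}=k(st+1)$.

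The substance of the lemma is the final claim, and here I would treat the two forbidden values uniformly by passing to a spread. If $k=1$, then each point lies on exactly one line of $L^{G}$; if $k=t$, then each point lies on exactly $t+1-t=1$ line of the complementary set $M:=\mathcal{L}\setminus L^{G}$. In either case write $S$ for the relevant set of lines ($L^{G}$ or $M$), which is $G$-invariant (being an orbit, or the complement of one), and note that every point lies on exactly one line of $S$. Two lines of $S$ cannot then share a point, so $S$ is a \emph{spread}, and the relation ``lying on a common line of $S$'' partitions $\mathcal{P}$ into $\abs{S}$ blocks, each the point set of a line of $S$ and hence of size $s+1$. This partition is preserved by $G$, because $G$ fixes $S$ setwise and preserves incidence. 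Since $s\geq 2$ the blocks have size $s+1\geq 3>1$, and since $st+1>1$ the blocks are proper, so the partition is a non-trivial block system, contradicting the primitivity of $G$ on $\mathcal{P}$. Therefore $k\neq 1,t$.

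The main obstacle is the last part: one must recognise that both $k=1$ and $k=t$ produce a spread (for $k=t$ by passing to the complementary line set rather than to $L^{G}$ itself), and that a $G$-invariant spread always yields a non-trivial system of imprimitivity. The constancy of $k$ and the counting identity are routine orbit and double-counting arguments.
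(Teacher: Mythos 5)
Your proof is correct and follows essentially the same route as the paper's: constancy of $k$ via point-transitivity, a double count of incident point--line pairs to get $\abs{L^{G}}=k(st+1)$, and the observation that $k=1$ (or, by passing to the complementary line set, $k=t$) yields a $G$-invariant spread whose lines partition $\mathcal{P}$ into a non-trivial block system, contradicting primitivity. If anything, you are slightly more careful than the paper in the $k=t$ case, since you only use that the complement of the orbit is $G$-invariant rather than asserting it is itself an orbit.
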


\begin{proof}
We simply count the pairs $(p,M) \in \mathcal{P} \times L^{G}$, such that $p$ and $M$ are incident, in two ways. There are $|L^{G}|$ lines, each incident with $s+1$ points, giving $|L^{G}|(s+1)$ pairs. Since $G$ is transitive on points, for all $p, q \in \mathcal{P}$, there is some $g \in G$ which maps $p$ to $q$, and all lines of $L^{G}$ at $p$ to lines of $L^{G}$ at $q$. This means that every point is incident with the same number, $k$, of lines of $L^{G}$. The number of pairs is thus also $k \abs{\mathcal{P}}$. Combining gives $|L^{G}|=k\frac{\mathcal{\abs{P}}}{s+1}=k(st+1)$. 

If $k=1$, every point is incident with one line of $L^{G}$, so the lines of $L^{G}$ determine a partition of the point set which is preserved by $G$, i.e., they form a system of imprimitivity, which is not possible if $G$ acts primitively on points. This also applies for $k=t$, for the complement of a line orbit with $k=t$ is a line orbit with $k=1$. 
\end{proof}

As far as we can tell, after surveying the literature and asking several experts in this field, Lemma \ref{LOL} has not been utilised in the study of generalised polygons before, although it is essentially just the observation that the point-orbits and line-orbits form a tactical configuration (see \cite[Chapter 1]{Dembowski:1997sy} for more), together with the fact that there is just one point-orbit in this case.

\section{The algorithm} \label{algorithm}

Let $G$ be an arbitrary finite group. The goal is to determine whether there are any generalised quadrangles upon which $G$ can act point-primitively (and faithfully). In other words, are there any generalised quadrangles where $G$ appears as a subgroup of the full automorphism group, and is primitive on the point set in this action? Prior work has focused on showing that certain groups or types of groups cannot do this. Showing definitively that a group can do it is much harder, because essentially there is no way to show whether a group can act point-primitively on a quadrangle other than to search for a quadrangle such that it does. 

The process discussed here works because the fact that $G$ acts primitively on the points of $\mathcal{Q}$ implies certain restrictions on the parameters of $\mathcal{Q}$. Hence you only need to check a much reduced subset of all quadrangles  in order to find those acted upon point-primitively by $G$. By doing this repeatedly and by using several different properties that any quadrangle acted on point-primitively by $G$ must have, one can whittle down the set of quadrangles that must be checked directly to a (hopefully quite small) finite number. As it happens, the number that must be checked often turns out to be zero under our method. The algorithm that we have developed uses the following steps:
\begin{enumerate}
\item 
The stabilisers of points in this primitive action of $G$ must form a conjugacy class of maximal subgroups of $G$. Thus, any hypothetical quadrangle which is acted on point-primitively by $G$ must have a number of points equal to the index of some maximal subgroup of $G$ (by the Orbit-Stabiliser theorem).

\item Each generalised quadrangle has an order $(s,t)$ which obeys $\abs{\mathcal{P}}=(s+1)(st+1)$, $s \leq t^{2}$, $t\leq s^{2}$ and $s+t$ divides $st(st+1)$ (see Lemma \ref{basiclemma}). The first of these implies that $s+1$ must divide $\abs{\mathcal{P}}$. So we set $s+1=j$, run $j$ over all of the divisors of each possible $\abs{\mathcal{P}}$ found in the last step and solve for \[t = \frac{\abs{\mathcal{P}}/j-1}{j-1}.\] It is then checked whether $t$ is an integer and obeys the other restrictions. This generates a list of possible $(s,t)$'s, but not all will be compatible with $G$ acting point-primitively on these hypothetical quadrangles. This is where Lemmas \ref{OOE} and \ref{LOL} come in. 
The use of Lemma \ref{OOE} is clear: find the set of prime divisors of $\abs{G}$, take those greater than $s+1$ and $t+1$ and test whether any divide the sizes of the point-stabilisers (which are known) or do not divide $\abs{\mathcal{L}}$ (the sizes of the line stabilisers are not known). If either of these occur, then $(s,t)$ can be eliminated, for it is incompatible with $G$\footnote{This does not tend to be a very effective test, for the size of $s$ and $t$ often exceeds all prime divisors of $\abs{G}$.}.
That the sizes of the line stabilisers are not known is partially negated by Lemma \ref{LOL}. The size of each line orbit is the index in $G$ of the stabiliser of a line in that orbit. Lemma \ref{LOL} tells us that this index must be divisible by $st+1$. The line-orbits partition the line set, so there must be some set of subgroups of $G$ whose indices are divisible by $st+1$ and sum to the number of lines. This is a surprisingly difficult test to pass, and it eliminates the vast majority of all cases which reach this step.

\item The algorithm now has a set of possibilities for $\abs{\mathcal{P}}$ and has split those by the possible $(s,t)$'s. These are further subdivided and reduced by looking at the possibilities for the sets of points, $\mathcal{N}_{p}$, which can be collinear with a point $p$. Fixing $p$ in a hypothetical quadrangle of order $(s,t)$, there will be $s(t+1)$ other points collinear with $p$. The stabiliser $G_{p}$ must preserve this set, so $\mathcal{N}_{p}$ is the union of a some orbits of $G_{p}$. The action is determined by specifying $G$ and $G_{p}$ and hence these orbits (and their sizes, known as \textit{subdegrees}) can be calculated. The sets of orbits whose combined size is $s(t+1)$ can then found, and these are the possibilities for the set $\mathcal{N}_{p}$. As of yet, we know of no simple test which can be used to eliminate some of these orbit combinations as being valid neighbourhood possibilities. This is currently the most significant weakness of this process.
\end{enumerate}

At this point enough information has been collected that each remaining case (consisting of parameters $\abs{\mathcal{P}}$, $G_{p}$, $(s,t)$, $\mathcal{N}_{p}$) contains at most one generalised quadrangle. This is because specifying $G_{p}$ (which determines the action on points) and the set of points collinear with a given point $p$ suffices to determine the collinearity of any two points.
Hence each case is associated to a unique point-collinearity graph, which by construction is acted on point-primitively by $G$. This graph need not be associated to a generalised quadrangle however. First of all, as mentioned in Section \ref{2}, the point-collinearity graph of a quadrangle must be strongly regular with parameters $\left((s+1)(st+1),s(t+1),s-1,t+1\right)$. 
The graphs coming out of each case are checked to see whether this is true or not.

Even if a graph passes this test, it need not be the point-collinearity graph of any generalised quadrangle. If it is not, it is called a \textit{pseudo-geometric graph}, and these are quite rare. There are also computational methods (e.g. using \cite{GRAPE}) for determining whether a given graph is associated to any quadrangle or not, and finding such quadrangles. It is also known that there will be a unique such quadrangle, if it exists. By construction the quadrangles found have $G$ as a point-primitive automorphism group, and are (up to isomorphism) the only such quadrangles\footnote{Hence, the process does not construct a list of quadrangles and check if they are acted on primtively by $G$, as suggested in the informal motivation at the beginning of this section, but instead constructs objects with $G$ as a point-primitive automorphism group and checks if they are quadrangles.}.

\section{Analysing the almost simple sporadic groups}

The main component of the proof of Theorem \ref{maintheorem} is the automation of the process by way of a computer program. This was done using the \textsf{GAP} \cite{GAP4} computer algebra software, with assistance from the \AtlasRep\ \cite{AtlasRep1.5.1}, \textsf{GRAPE} \cite{GRAPE}, \textsf{Design} \cite{Design} and \textsf{FinInG} \cite{fining} packages. The code for this implementation can be found in Appendix \ref{appendix:code}.

A group $G$ is almost simple if $T \cong \text{Inn}(T) \leq G \leq \text{Aut}(T)$ for some finite non-abelian simple group $T$. The automorphism groups of all classical and dual classical polygons are almost simple of Lie type ($T$ is a Lie type simple group), so the other almost simple groups are a natural place to place to start searching for groups which could be the point-primitive automorphism group of a quadrangle. Focusing specifically on the sporadic groups provides a finite but challenging set of examples, and \AtlasRep\ conveniently provides access to good representations of these groups. This means that the correctness of the stated theorem relies upon the correctness of the contents of \AtlasRep\footnote{But not its completeness. At the time of writing \AtlasRep\ had gaps, how they were overcome is discussed below.}.

\subsection{Verification of the implementation}

The code was tested by applying it to the simple groups associated to the classical families for small $q$. These are known to act point-primitively on their respective quadrangles \cite{Van-Maldeghem:1998sj}. Table \ref{known} displays the groups checked and the results. All of the known quadrangles for these groups were found, giving confidence that our implementation won't miss any quadrangles should they exist (which is by far more important, and more likely to be a problem, than spuriously detecting non-existent quadrangles). This demonstrates one possible use of implementing the program like this: to directly, quickly and independently check the results about which groups act primitively on quadrangles that have been obtained by other means.

\begin{table}[!ht]
\centering
\small
\begin{tabular}{ l l  c  c  l }
\toprule
$G$ & $M$ & $(s,t)$ & Graph? & $\mathcal{Q}$ \\
\midrule

\rowcolor{green!40!yellow!25}$\PSp(4,2)^{\prime} \cong A_{6}$ & $S_{4}$ & $(2,2)$ & $\checkmark$ & $W(3,2) \cong Q(4,2)$\\
\rowcolor{green!40!yellow!25} & $S_{4}$ & $(2,2)$ & \checkmark & $W(3,2) \cong Q(4,2)$\\

$\PSp(4,3) \cong \PSU(4,2)$ & $2^{4}:A_{5}$ & $(2,4)$ & $\checkmark$ & $Q^{-}(5,2)$\\
 & $3^{1+2}.2A_{4}$ & $(3,3)$ & $\checkmark$ & $W(3,3)$\\
 & $3^{3}.S_{4}$ & $(3,3)$ & $\checkmark$ & $Q(4,3)$\\
 & $2.(A_{4}\times A_{4}).2$ & $(4,2)$ & $\checkmark$ & $H(3,4)$\\

\rowcolor{green!40!yellow!25} $\PSp(4,4)$ & $2^{6}.(3 \times A_{5})$ & $(4,4)$ & $\checkmark$ & $W(3,4) \cong Q(4,4)$\\
\rowcolor{green!40!yellow!25} & $2^{6}.(3 \times A_{5})$ & $(4,4)$ & $\checkmark$ & $W(3,4) \cong Q(4,4)$\\
\rowcolor{green!40!yellow!25} & $S_{6}$ & $(9,15)$ & $\times$ & - \\

$\PSp(4,5)$ & $5^{1+2}:4A_{5}$ & $(5,5)$ & $\checkmark$ & $W(3.5)$\\
 & $5^{3}:(2 \times A_{5}).2$ & $(5,5)$ & $\checkmark$ & $Q(4,5)$\\
 & $2.(A_{5} \times A_{5}).2$ & $(4,16)$ & $\times$ & - \\

\rowcolor{green!40!yellow!25} $\PSU(4,3)$ & $3^{4}:A_{6}$ & $(3,9)$ & \checkmark & $Q^{-}(5,3)$\\
\rowcolor{green!40!yellow!25} & $3^{1+4}:(2.S_{4})$ & $(9,3)$ & \checkmark & $H(3,9)$\\
\rowcolor{green!40!yellow!25} & $\PSU(3,3)$ & $(11,4)$ & $\times$ & -\\

$\PSU(5,2)$ & $2^{1+6}:(3^{2}:3:Q_{8})$ & $(4,8)$ & $\checkmark$ & $H(4,4)$\\
 & $2^{4+4}:GL(2,4)$ & $(8,4)$ & $\checkmark$ & $H(4,4)^{D}$\\
 & $(3^{2}:3:Q_{8}):3 \times S_{3}$ & $(9,39)$ & $\times$ & - \\

\bottomrule
\end{tabular}
\caption{This table shows the group, the maximal subgroups which had $(s,t)$'s, if the associated graph was strongly regular, and which generalised quadrangle it represented (if any).}
\label{known}
\end{table}

\subsection{Results from testing the almost simple sporadic groups}

Table \ref{Sporadic Results} shows a list of all of the almost simple sporadic groups and contains information about the operation and output of our program when applied to each of them in turn. No generalised quadrangle was found in any of the cases.  It must be noted that our automated computer code did not suffice to analyse all of these groups, so in some cases manual work was required in order to complete the process. The problems that were encountered and their solutions are discussed below. That information should also clear up some of the odd features of Table \ref{Sporadic Results} that the reader may notice. Unless a group is mentioned below, the computer program given in Appendix \ref{appendix:code} was able to complete the required analysis without issue.

\begin{table}[h!]
\centering
\tiny
\begin{tabular}{ l l  c  c  c  c  c }
\toprule
$G$ & $M$ & $(s,t)$ & $(s,t)^{*}$ & Subdegrees & NC & $\mathcal{Q}$ \\
\midrule

\rowcolor{green!40!yellow!25}$M_{11}$ & $2.S_{4}$ & $(4,8)$ & $(4,8)$ & $[8,12,24^{(4)},48]$ & $[0,1,1,0]$ & -\\

$M_{12}$ & - & - & - & - & - & -\\
$M_{12}.2$ & - & - & - & - & - & -\\

\rowcolor{green!40!yellow!25}$M_{22}$ & - & - & - & - & - & -\\
\rowcolor{green!40!yellow!25}$M_{22}.2$ & - & - & - & - & - & -\\

$M_{23}$ & - & - & - & - & - & -\\

\rowcolor{green!40!yellow!25}$M_{24}$ & - & - & - & - & - & -\\

$J_{1}$ & $F_{42}$ & $(21,9)$ & - & - & - & -\\

\rowcolor{green!40!yellow!25}$J_{2}$ & $3.A_{6}.2$ & $(9,3)$ & - & - & - & -\\
\rowcolor{green!40!yellow!25}  & $5^{2}:D_{12}$ & $(13,11)$ & - & - & - & -\\
\rowcolor{green!40!yellow!25}$J_{2}.2$ & $3.A_{6}.2^{2}$ & $(9,3)$ & $(9,3)$ & $[36,108,135]$ & [1,0,0] & -\\
\rowcolor{green!40!yellow!25}  & $5^{2}:(4 \times S_{3})$ & $(13,11)$ & $(13,11)$ & $[15,25,50^{(2)},75^{(5)},150^{(6)},300^{(2)}]$ & - & -\\

$J_{3}$ & $2^{2+4}:(3\times S_{3})$ & $(44,22)$ & - & - & - & -\\
$J_{3}.2$ & $2^{2+4}:(S_{3}\times S_{3})$ & $(44,22)$ & - & - & - & -\\

\rowcolor{green!40!yellow!25}$J_{4}$ & - & - & - & - & - & -\\

$Co_{1}$ & - & - & - & - & - & -\\

\rowcolor{green!40!yellow!25}$Co_{2}$ & $M_{23}$ & $(161,159)$ & - & - & - & -\\

$Co_{3}$ & - & - & - & - & - & -\\

\rowcolor{green!40!yellow!25}$Fi_{22}$ & $O_{7}(3)$ & $(39,9)$ & - & - & - & -\\
\rowcolor{green!40!yellow!25}  & $O_{7}(3)$ & $(39,9)$ & - & - & - & -\\
\rowcolor{green!40!yellow!25}  & $O_{8}^{+}(2):S_{3}$ & $(25,95)$ & - & - & - & -\\
\rowcolor{green!40!yellow!25}  & & $(39,49)$ & (39,49) & [1575,22400,37800] & - & -\\
\rowcolor{green!40!yellow!25}$Fi_{22}.2$ & $O_{8}^{+}(2):S_{3} \times 2$ & $(25,95)$ & - & - & - & -\\
\rowcolor{green!40!yellow!25} & & $(39,49)$ & - & - & - & -\\

$Fi_{23}$ & $[3^{10}].(L_{3}(3)\times 2)$ & $(2991,689)$ & - & - & - & -\\

\rowcolor{green!40!yellow!25}$Fi_{24}^{\prime}$ & $Fi_{23}$ & $(115,23)$ & - & - & - & -\\
\rowcolor{green!40!yellow!25}$Fi_{24}^{\prime}.2$ & $Fi_{23}\times 2$ & $(115,23)$ & (115,23) & $[31671,275264]$ & - & -\\

$HS$ & - & - & - & - & - & -\\
$HS.2$ & - & - & - & - & - & -\\

\rowcolor{green!40!yellow!25}$McL$ & $M_{22}$ & $(8,28)$ & - & - & - & -\\
\rowcolor{green!40!yellow!25} & $M_{22}$ & $(8,28)$ & - & - & - & -\\
\rowcolor{green!40!yellow!25}$McL.2$ & - & - & - & - & - & -\\

$He$ & - & - & - & - & - & -\\
$He.2$ & - & - & - & - & - & -\\

\rowcolor{green!40!yellow!25}$Ru$ & ${}^{2}F_{4}(2)$ & $(9,45)$ & - & - & - & -\\
\rowcolor{green!40!yellow!25} & $2^{6}.U_{3}(3).2$ & $(57,57)$ & $(57,57)$ & $[63,756,2016^{(3)},16128^{(2)}\ldots]$ & - & -\\

$Suz$ & $U_{5}(2)$ & $(41,19)$ & - & - & - & -\\
& $3^{2+4}:2.(A_{4}\times 2^{2}).2$ & $(129,191)$ & - & - & - & -\\
$Suz.2$ & $U_{5}(2):2$ & $(41,19)$ & - & - & - & -\\
& $3^{2+4}:2.(S_{4}\times D_{8})$ & $(129,191)$ & (129,191) & $[72,144,486^{(2)},729,1944,3888^{(2)},4374,$ & & \\

& & & & $7776^{(3)},8748,11664,17496,23328\ldots]$ & - & -\\

\rowcolor{green!40!yellow!25}$O'N$ & $L_{3}(7):2$ & $(19,323)$ & - & - & - & -\\
\rowcolor{green!40!yellow!25} & $L_{3}(7):2$ & $(19,323)$ & - & - & - & -\\
\rowcolor{green!40!yellow!25}$O'N.2$ & - & - & - & - & - & -\\

$HN$ & $A_{12}$ & $(149,51)$ & - & - & - & -\\
$HN.2$ & $S_{12}$ & $(149,51)$ & $(149,51)$ & $[1,462,5040,10395\ldots]$ & - & -\\

\rowcolor{green!40!yellow!25}$Ly$ & - & - & - & - & - & -\\

$Th$ & - & - & - & - & - & -\\

\rowcolor{green!40!yellow!25}$B$ & - & - & - & - & - & -\\

$M$ & - & - & - & - & - & -\\

\bottomrule
\end{tabular}
\captionsetup{justification=centering} 
\caption{$(s,t)^{*}$: Lists $(s,t)$ possibilities which pass the tests of Lemmas \ref{OOE} and \ref{LOL}.\\
Subdegrees: Lists the subdegrees of the relevant action, with multiplicity.\\
 NC: Gives the number of different combinations for the point neighbourhoods.\\
$\mathcal{Q}$: Lists the quadrangles output by the program.}
\label{Sporadic Results}
\end{table}

\begin{itemize}
\item $Co_{1}$: Not all maximal subgroups of $Co_{1}$ are accessible via \AtlasRep. However, the online website for \AtlasRep\ contains a list of the orders and indices of these subgroups (which is all of the information required at first). This information was manually entered into the part of the program which finds the initial possible $(s,t)$ values. No possibilities were found for any of the maximal subgroup classes. Hence $Co_{1}$ was eliminated. 
\item $Fi_{22}.2$: Similarly, \AtlasRep\ misses many maximal subgroups of $Fi_{22}.2$. Following the same process as for $Co_{1}$ finds a single possibility: $(s,t)=(25,95)$ for the maximal subgroup classes $O_{8}^{+}(2):S_{3} \times 2$. This possibility is not eliminated by the use of Lemmas \ref{OOE} or \ref{LOL}. This phenomenon, of the simple group being eliminated without issue, but the almost simple group requiring more work occurs several times\footnote{The problem is mainly in how the test based on Lemma \ref{LOL} is conducted. Finding all subgroup indices directly is infeasible, but they must all be multiples of the index of some maximal subgroup. Hence, line stabilisers must have indices which are multiple of lcm$(\abs{G:M},st+1)$ for some maximal subgroup $M$. This coarser information often suffices, because the indices of maximal subgroups of simple groups tend to be large and the lcm is often larger than the number of lines. But in the almost simple case the index $2$ maximal subgroup causes problems. Often the problem can be resolved without resorting to the trick discussed for $Fi_{22}.2$, and if so this is shown.}.

This problem can be resolved as follows. Suppose $G$ was an almost simple group with corresponding simple group $T$, and that $M\leqslant G$ was maximal. Often, $T \cap M$ will be maximal in $T$\footnote{If $T\cap M$ isn't maximal, $M$ is called a \emph{novelty} maximal subgroup of $G$. If $T \cap M$ is maximal, $M$ is a non-novelty maximal subgroup of $G$.}. Suppose this was the case, and that $G$ acted point-primitively on some quadrangle with stabiliser $M$. Then $T$ would act on it with point-stabiliser $T \cap M$ which is maximal in $T$. Hence the action of $T$ would also be primitive with stabiliser $T \cap M$. But, if $T$ has already been analysed and eliminated, this cannot occur, so we rule out $G$ acting with stabiliser $M$.

For this case, $O_{8}^{+}(2):S_{3} \times 2$ is a non-novelty maximal subgroup \cite{ATLAS}, the intersection with $Fi_{22}$ is the maximal subgroup $O_{8}^{+}(2):S_{3}$. Hence this case, and $Fi_{22}.2$ are eliminated.

\item $Fi_{23}$: Similar to $Co_{1}$, $Fi_{23}$ does not have all of its maximal subgroups in \AtlasRep. Manually checking the $(s,t)$'s finds the one possibility $(2991,689)$. Manually entering this into the test based on Lemma \ref{LOL} eliminates this possibility as well.

\item $Fi_{24}^{\prime}$: Manual $(s,t)$ checking reveals $(s,t)=(115,23)$, with the $Fi_{23}$ class as the only possibility. This is eliminated by using Lemma \ref{LOL}. This time, the maximal subgroup indices are not in \AtlasRep. Instead, a list of the structure descriptions of the maximal subgroups was sourced from \cite{MaximalSubgroups} and it is simple, if tedious, to calculate the indices using these. The indices were then run through the $(s,t)$ finding step.

\item $Fi_{24}^{\prime}.2$ : Similar to $Fi_{24}^{\prime}$. Again, the only possibility found is $(s,t)=(115,23)$ associated to maximal subgroups $Fi_{23} \times 2$. This is a non-novelty maximal subgroup \cite{ATLAS}, so this case is eliminated also.

\item $HN.2$: This group has all of its maximal subgroups available in \AtlasRep, so things initially proceed as normal. The one $(s,t)$ possibility, $(149,51)$, is not eliminated, and so the program proceeds to the next step of trying to calculate the subdegrees of the required action. $HN.2$ proves to be too large for this to work. The relevant maximal subgroup is non-novelty, so it can be eliminated that way. However, in this case The Atlas \cite{ATLAS} lists the subdegrees of the relevant action, and manual inspection shows that no combination sums to $s(t+1)$.

\item $B$: The maximal subgroups are in \AtlasRep, however $B$ is far too large for practical computation. Instead, a list of the maximal subgroup indices are extracted and manually run through the $(s,t)$ finder step, producing no possibilities.

\item $M$: Same as for $B$, however this time \textsf{GAP}'s factorisation function proved inadequate. Instead a version of the $(s,t)$ finder was written in \textsf{Python3} and used instead. No $(s,t)$'s were found.
\end{itemize}
\vspace{-0.2cm}
This concludes the proof of Theorem \ref{maintheorem}.

\section{The Hemisystem Conjecture}

As useful as the above discussed process is, it has its limitations. The first few steps, using maximal subgroup indices and finding valid $(s,t)$ values, are simple and cheap, especially since a lot is known about maximal subgroups of simple groups. Beyond this first step, the method begins to rely on heavy computations with the relevant group which can often overwhelm basic computers. Manual intervention (as in the last part of the previous section) can only get you so far. This limits the possibility of smoothly analysing large numbers of groups. 

Our efforts to improve the program have focused on the first few steps, so as to eliminate as many cases as possible and avoid the intensive group calculations whenever we can. The tests on the $(s,t)$ values which are based on Lemmas \ref{OOE} and \ref{LOL} were discovered as a result of this focus. They are very successful, together eliminating the vast majority of cases. Without Lemma \ref{LOL} we would have been unable to rule out $J_{1}$ or $Fi_{23}$. Despite the utility of these lemmas, we continue to seek other tests that can be added to the process.

One possibility relies on the following observation. The way that Lemma \ref{LOL} is used relies on the fact that the size of the line-orbits must be divisible by $st+1$, which only requires the relevant automorphism group to be point-transitive. The added condition for when the group is point-primitive is minor and not currently useful. This means that our use of Lemma \ref{LOL} is not using the full strength of the assumption that the group is acting point-primitively. We suspected that a much stronger, and more useful, restriction on possible line-orbits could be proven if the point-primitivity condition was utilised more completely. Our investigations into this possibility led us to the following conjecture:

\begin{conjecture*}[The Hemisystem Conjecture]
Suppose that $G$ is a group acting point-primitively on a generalised quadrangle $\mathcal{Q}$. Then $G$ either acts transitively on the lines, or has exactly two line-orbits, both of equal size.
\end{conjecture*}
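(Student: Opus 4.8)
The plan is to recast the conjecture as a statement about a single eigenspace of the line-concurrency graph, and then to attack that eigenspace using point-primitivity. Concretely, first I would fix the combinatorial skeleton using Lemma~\ref{LOL}: assume $G$ is point-primitive and let $\mathcal{L}_{1},\dots,\mathcal{L}_{m}$ be its orbits on $\mathcal{L}$. By Lemma~\ref{LOL} each $\mathcal{L}_{i}$ meets every point in a constant number $k_{i}$ of lines, with $\sum_{i}k_{i}=t+1$ and $k_{i}\notin\{1,t\}$; line-transitivity is exactly the case $m=1$ (equivalently some $k_{i}=t+1$). So I may assume $m\geq 2$ and $2\leq k_{i}\leq t-1$ for all $i$, and the target becomes $m=2$ with $k_{1}=k_{2}=(t+1)/2$.

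Next I would translate this into linear algebra. Let $N$ be the point-line incidence matrix and $A_{L}$ the line-concurrency adjacency matrix, so that $N^{T}N=(s+1)I+A_{L}$ and the eigenvalues of $A_{L}$ are $t(s+1)$, $t-1$ and $-(s+1)$; write $V_{-}$ for the $(-(s+1))$-eigenspace, which is exactly $\ker N$. Since $\mathcal{L}_{i}$ is a $k_{i}$-fold cover of the points we have $N\chi_{i}=k_{i}\mathbf{1}$, and a short computation gives $\chi_{i}=\tfrac{k_{i}}{t+1}\mathbf{1}+v_{i}$ with $v_{i}\in V_{-}$; thus each line-orbit is an intriguing set of ``$m$-ovoid type'' for the dual quadrangle. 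Because $G$ is point-transitive the trivial module has multiplicity one in $\mathbb{C}[\mathcal{P}]$, forcing it to be absent from both non-trivial eigenspaces of the point graph, and since the $G$-map $N$ identifies the middle eigenspace of $A_{L}$ with an eigenspace of the point graph, the fixed space of that middle eigenspace also vanishes. Hence the number of line-orbits satisfies $m=1+\dim V_{-}^{G}$, where $V_{-}^{G}$ is the $G$-fixed subspace of $V_{-}$. The conjecture is therefore equivalent to showing $\dim V_{-}^{G}\leq 1$, and, in the case of equality, $k_{1}=k_{2}$.

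I expect the bound on $\dim V_{-}^{G}$ and the equality case to be the main obstacle. One can compute $\lVert v_{i}\rVert^{2}=k_{i}(t+1-k_{i})(st+1)/(t+1)$ and $\langle v_{i},v_{j}\rangle=-k_{i}k_{j}(st+1)/(t+1)$ for $i\neq j$; when $\dim V_{-}^{G}=1$ one gets $v_{2}=-v_{1}$, and the pair $(\mathcal{L}_{1},\mathcal{L}_{2})$ becomes equitable for the concurrency graph with intersection numbers $(s+1)(k_{1}-1)$ and $(s+1)k_{1}$. These relations are consistent for every admissible unequal split, and nothing in the linear algebra caps $\dim V_{-}^{G}$, so genuine point-primitivity must enter here. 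The difficulty is structural: $V_{-}=\ker N$ is precisely the part of the line module orthogonal to the image of the point module, so point-primitivity --- a statement about $\mathbb{C}[\mathcal{P}]$ and the maximality of $G_{p}$ --- has no direct grip on $V_{-}$. My plan to bridge this is to pass to the local action, observing that a line-orbit decomposition is the same datum as a $G_{p}$-invariant partition of the $t+1$ lines through a point $p$ into blocks of sizes $k_{1},\dots,k_{m}$, and then to try to deduce from the maximality of $G_{p}$ in $G$ that the group induced by $G_{p}$ on these $t+1$ lines has at most two orbits, equal when there are two. I anticipate that completing this local-to-global step will require either the O'Nan--Scott reduction already available for point-primitive groups on quadrangles (reducing to the Almost Simple and Holomorph Affine types, where point stabilisers are understood explicitly) or a new Benson-type identity, summed over $G$, linking $V_{-}^{G}$ to the permutation character on points.
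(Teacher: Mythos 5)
You should know at the outset that the statement you were given is not a theorem of the paper: it is the paper's \emph{Hemisystem Conjecture}, which the authors explicitly leave open. The paper offers no proof at all, only evidence: an exhaustive check of the point-primitive subgroups of the automorphism groups of all known generalised quadrangles admitting such groups (the classical and dual classical quadrangles, $\GQ(3,5)$ and $\LSce$), each of which turns out to be line-transitive or to have exactly two line-orbits of equal size, together with heuristic support from Thas's results on $m$-ovoids and $k$-covers (sufficient regularity of a $k$-cover forces it to be a hemisystem). So any complete argument would be new mathematics going beyond the paper, and your proposal should be judged on whether it supplies one. It does not, as you yourself concede, but it is worth recording precisely where it stops.

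The reduction itself is correct and is a genuinely sharper packaging than the paper's own use of Lemma \ref{LOL}: $N\chi_i = k_i\mathbf{1}$ is Lemma \ref{LOL}; the decomposition $\chi_i = \frac{k_i}{t+1}\mathbf{1} + v_i$ with $v_i \in \ker N$ is right; $G$-equivariance of $N$ together with point-transitivity does kill the fixed vectors in the middle eigenspace of $A_L$; hence the number of line-orbits is $m = 1 + \dim V_-^G$, and your norm and inner-product formulas check out. The genuine gap is that every one of these facts uses only point-\emph{transitivity}, and point-transitivity provably cannot suffice: the paper's Appendix \ref{appendix:lineorbits} lists point-transitive (imprimitive) subgroups with three, four, even five line-orbits (e.g.\ subgroups of $\Aut(Q^-(5,2))$), so $\dim V_-^G$ is not bounded by any argument of this kind. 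Primitivity must therefore enter in an essential way, and neither of your proposed routes for making it enter is available. The O'Nan--Scott reduction you invoke does not exist in the form you need: the literature cited in the paper eliminates only the Holomorph Compound type, and the reduction to Almost Simple and Holomorph Affine types is itself only expected, not proved. As for the local route, maximality of $G_p$ in $G$ gives no control over the permutation group induced by $G_p$ on the $t+1$ lines through $p$ --- that induced group is a quotient of $G_p$, about whose orbit structure maximality of $G_p$ in $G$ says nothing; moreover your local criterion (at most two $G_p$-orbits on the pencil, equal when two) is sufficient but not necessary for the conjecture, since distinct $G_p$-orbits on the pencil can fuse into a single $G$-orbit on lines, so it is not even clear this is the right local statement to aim for. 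What you have, then, is a clean equivalent reformulation --- the conjecture says exactly that $\dim V_-^G \leq 1$, with $k_1=k_2$ in the case of equality --- together with an honest admission that the entire content of the open problem is concentrated in the step you have not carried out.
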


When the group $G$ has exactly two line-orbits of equal size, we say that $G$ \textit{has a hemisystem}. The reason for this should become clear later. The inspiration for this conjecture comes from two sources. The first are the examples of quadrangles with point-primitive groups: in all known cases the statement of the conjecture holds. The second is that the theory of objects called $k$-covers lends some heuristic support to the idea behind the conjecture.

\subsection{The known examples}

The known generalised quadrangles with point-primitive automorphism groups are the classical (and dual-classical) quadrangles as well as $\GQ(3,5)$ and $\LSce$. We sought to examine the line-orbits of the point-primitive groups on these quadrangles. Our efforts to do this were greatly helped by the classification of point-transitive automorphism groups of classical quadrangles assembled in \cite{GiudiciTransitiveSubgroups}. Some of the information from this list is reproduced in Table \ref{GiudiciAudit}. The table describes the point-transitive subgroups up to conjugacy in the full automorphism group. Since primitive implies transitive, all of the point-primitive groups appear in this list. 
\begin{table}[!ht]
\centering
\scriptsize
\begin{tabular}{ l p{6cm} l l}
\toprule
$GQ$&Transitive Subgroup&Conditions& Comment\\
\midrule
$W(3,q)$&contains $\PSp(4,q) $& & primitive\\
&contains $\PSL(2,q^{2}) $& & stabilises spread (\cite[Tables 8.12, 8.14]{Bray:2013aa})\\
&$2^{4}.A_{5},2^{4}.5,S_{5},2^{4}.D_{10},2^{4}.F_{20},2\times S_{5},2^{4}.S_{5}$&$q=3$ &imprimitive\\
&$A_{5},S_{5}$&$q=2$ & stabilises spread\\
\midrule
$Q(4,q)$, $q$ odd&contains $\PSp(4,q) $& &primitive\\
&$2^{4}.F_{20},2^{4}.A_{5},2^{4}.S_{5}$&$q=3$ &imprimitive\\
\midrule
$H(3,q^{2})$&contains $\PSU(4,q)$&&primitive\\
&contains $\PSL(3,4)$&$q=3$&primitive\\
\midrule
$Q^{-}(5,q)$&contains $\PSU(4,q)$&&primitive\\
&contains $\SU(3,q)$&&stabilises spread (\cite[Table 8.33]{Bray:2013aa})\\
&$C_4\times \PSL(2,7)$, $(C_4\times \PSL(2,7)).2$ &$q=3$&stabilises spread\\
& $C_2 \times (\PSL(2,7):2)$, $(C_2 \times (\PSL(2,7):2)).2$&$q=3$&stabilises spread\\
&$A_{7}$&$q=5$&`exceptional'\\
&$C_{513}:C_{9},(C_{513}:C_{9}):C_{2}$&$q=8$&`exceptional'\\
&Contains a regular extraspecial $3$-group&$q=2$&stabilises spread\\
&contains $\PSL(3,4):2$&$q=3$&imprimitive\\
\midrule
$H(4,q^{2})$&contains $\PSU(5,q)$&&primitive\\
\midrule
$H(4,q^{2})^{D}$&contains $\PSU(5,q)$&&primitive\\
\bottomrule 
\end{tabular}
\caption{} 
\label{GiudiciAudit}
\end{table}
Our first task was to locate the point-primitive groups. For sufficiently large $q$, the table shows that all of the groups fall into orderly infinite families. There is always the simple group part of the full automorphism group, which is known to always be point and line primitive. The other infinite families of point-transitive subgroups are known to arise as the stabilisers of geometric objects in the ambient projective spaces that the quadrangles live in, so their actions are well understood. In fact, they are subgroups of the stabilisers of $1$-covers (a.k.a, \emph{spreads}: sets of lines with one line incident at each point), so the second part of Lemma \ref{LOL} implies the groups are not point-primitive. These spread stabilisers happen to be maximal in their associated collineation groups (see the indicated table in \cite{Bray:2013aa}, \cite{Cameron:1997aa}, or the relevant chapter of \cite{KleidmanLiebeck}), so the only primitive overgroup is the whole group, and so all cases are consistent with the conjecture.

This just leaves the exceptional point-transitive groups of the small $q$ quadrangles. These were analysed using \textsf{GAP} together with the \textsf{FinInG} \cite{fining} and \textsf{Design} \cite{Design} packages which were used to construct the necessary quadrangles and automorphism groups. We then calculated every conjugacy class of subgroups and determined if they were transitive or primitive on the points. Whether or not they were primitive\footnote{So that we would be able to see the line-orbits of primitive and imprimitive groups and make a comparison.}, if they were transitive we then looked at the line-orbits of (a representative of) each class. This analysis could be carried out except for $Q^{-}(5,5)$ and $Q^{-}(5,8)$, due to the large size of these examples.
These were dealt with by only finding the exceptional subgroup we were required to check, rather than searching all subgroup classes. This can be done by utilising other known constructions of these subgroups. For $Q^{-}(5,5)$, the $A_{7}$ subgroup class that we need to check can be found by using the projective representation of $A_{7}$ inside $\PGL(6,5)$ provided by \AtlasRep. For $Q^{-}(5,8)$, the relevant subgroups are the centralisers of Sylow-19 subgroups. Checking all relevant overgroups\footnote{The groups between these groups and their normaliser, which is a maximal subgroup. See the appendices of \cite{Bray:2013aa}.} finds no point-primitive examples except those already accounted for.

The results are as follows. All of the point-primitive groups that we examined were also line-transitive, except for two classes which were related to $\PSL(3,4) \leq \Aut(H(3,9))$, which each had two line-orbits, both of which contained half of the lines. So, for classical examples at least, it is almost true that point-primitive implies line-transitive. In fact, this was true for all examples we could think of off the top of our heads, which inspired us to carry out this survey at all. On the basis of the one exception, we made the conjecture as stated above.

However, it was possible that this pattern that we spotted was just an artefact of the classical quadrangles. Thus, it was necessary to analyse the remaining two quadrangles $\GQ(3,5)$ and $\LSce$. Both have symmetry groups which are primitive of Holomorph Affine type. $\GQ(3,5)$'s automorphism group was analysed by finding and checking every conjugacy class of subgroups (as done for the classical quadrangles), however $\LSce$'s group was simply too big for this to work.
A more efficient way to find primitive subgroups of the symmetry group of $\LSce$ is to use the result of \cite{PraegerInclusions}
on inclusions of primitive groups in other primitive groups.

\begin{theorem}[{\cite[Proposition 5.2]{PraegerInclusions}}]
Suppose that $G$ is a finite primitive permutation group of Holomorph Affine type with translation subgroup $T$, and that $H \leq G$ is also primitive. Then either $H$ is of Holomorph Affine type and contains $T$ or:
\begin{itemize}
\item $H$ is of Almost Simple type and is isomorphic to $\PSL(2,7)$, and $G=\mathrm{AGL}(3,2)$ or
\item $H$ is of Product Action type and $H$ has primitive component $\PSL(2,7)$.
\end{itemize}
\end{theorem}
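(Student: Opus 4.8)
The plan is to exploit the defining structure of a primitive group of Holomorph Affine type. Write $\Omega$ for the common point set, so that $|\Omega|=p^{d}$ for a prime $p$, and recall that $G\leq \mathrm{AGL}(d,p)=T\rtimes \GL(d,p)$ with $T=\soc(G)\cong \mathbb{F}_{p}^{d}$ the regular group of translations and $G_{0}=G\cap \GL(d,p)$ acting irreducibly on $T$ (irreducibility is forced by primitivity, since any $G_{0}$-invariant subspace would yield a block system). The ambient fact I would use repeatedly is that $\mathrm{AGL}(d,p)=N_{\mathrm{Sym}(\Omega)}(T)$: as $T$ is regular and abelian it is self-centralising in $\mathrm{Sym}(\Omega)$, so its normaliser is exactly the holomorph $T\rtimes \Aut(T)=\mathrm{AGL}(d,p)$. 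The whole argument then splits according to the intersection $T\cap H$.

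First I would dispose of the case $T\cap H\neq 1$. Since $T\trianglelefteq G$ we have $T\cap H\trianglelefteq H$, and in a faithful primitive group every nontrivial normal subgroup is transitive; as $T$ is regular this forces $|T\cap H|\geq |\Omega|=|T|$, and since $T\cap H\leq T$ we get $T\cap H=T$, i.e. $T\leq H$. Then $T$ is a regular abelian normal subgroup of $H$, so $H\leq N_{\mathrm{Sym}(\Omega)}(T)=\mathrm{AGL}(d,p)$ is itself affine, $H=T\rtimes H_{0}$ with $H_{0}=H\cap \GL(d,p)$, and primitivity of $H$ again forces $H_{0}$ to be irreducible on $T$. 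Thus $H$ is of Holomorph Affine type and contains $T$, which is the first alternative.

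The substance is the case $T\cap H=1$. Here $H\leq G\leq \mathrm{AGL}(d,p)=N_{\mathrm{Sym}(\Omega)}(T)$ means $H$ normalises $T$, and $C_{H}(T)=H\cap C_{\mathrm{Sym}(\Omega)}(T)=H\cap T=1$, so conjugation embeds $H$ into $\Aut(T)=\GL(d,p)$ as a subgroup $\widetilde{H}$. I would first show $\widetilde{H}$ is irreducible: if $W$ were a proper nonzero $\widetilde{H}$-invariant subspace, then the partition of $\Omega$ into $W$-cosets would be $H$-invariant, contradicting primitivity. Next I would rule out $H$ being affine. If $\soc(H)$ were elementary abelian it would give a nontrivial normal $p$-subgroup $\widetilde{N}\trianglelefteq \widetilde{H}$ in defining characteristic; such a unipotent group has nonzero fixed space, which is $\widetilde{H}$-invariant since $\widetilde{N}\trianglelefteq\widetilde{H}$, so by irreducibility $\widetilde{N}$ acts trivially and hence is trivial, a contradiction. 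Therefore $\soc(H)$ is nonabelian, and moreover $TH=T\rtimes H$ is itself a primitive group of Holomorph Affine type in which $H$ is a transitive complement to $T$ of degree $p^{d}$, so that the inclusion $H\leq G$ factors through $H\leq TH\leq G$.

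The hard part will be the final classification within this case, and this is where the Classification of Finite Simple Groups is unavoidable. What remains is to determine the primitive groups $H$ of prime-power degree $p^{d}$ with nonabelian socle that admit a faithful irreducible $d$-dimensional $\mathbb{F}_{p}$-representation (namely $\widetilde{H}\leq \GL(d,p)$) whose module has exactly $|\Omega|=p^{d}$ vectors. For almost simple $H$ with socle $S$, this is an extremely tight numerical coincidence between a permutation degree and the size of an irreducible module, and I would resolve it by appealing to the classification of primitive permutation groups of prime-power degree together with the lists of small-dimensional defining- and cross-characteristic representations of simple groups; the unique solution is the exceptional isomorphism $\PSL(2,7)\cong \GL(3,2)$, acting on the $8$ points of the projective line $\mathrm{PG}(1,7)$ while also acting naturally and irreducibly on $\mathbb{F}_{2}^{3}$, forcing $d=3$, $p=2$ and (since then $TH=\mathrm{AGL}(3,2)$) $G=\mathrm{AGL}(3,2)$. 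Finally, a Product Action $H$ would be handled by passing to its product decomposition and reducing each factor to the almost simple analysis just completed, yielding components $\PSL(2,7)$ and placing $H$ inside $\PSL(2,7)\wr S_{k}<\mathrm{AGL}(3,2)\wr S_{k}$; confirming that these are the only non-affine possibilities, and in particular eliminating the remaining O'Nan--Scott types of nonabelian-socle primitive groups, is the main obstacle and the part most reliant on detailed CFSG bookkeeping.
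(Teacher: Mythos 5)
This statement is not proved in the paper at all: it is quoted verbatim from Praeger's work (Proposition 5.2 of the cited reference), so the only meaningful comparison is against Praeger's own argument, which the paper uses as a black box. Your structural reductions are correct and are essentially the standard ones: $\mathrm{AGL}(d,p)=N_{\mathrm{Sym}(\Omega)}(T)$ because the regular abelian group $T$ is self-centralising; $T\cap H\trianglelefteq H$ is transitive if nontrivial (primitivity of $H$), forcing $T\leq H$ and hence the first alternative; and when $T\cap H=1$, the conjugation action embeds $H\cong\widetilde H\leq\GL(d,p)$, the coset-block argument gives irreducibility of $\widetilde H$, and the nonzero fixed space of a normal unipotent subgroup correctly rules out $H$ being of affine type. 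All of that is sound.

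The gap is that the actual content of the proposition --- that the only non-affine possibilities are $H\cong\PSL(2,7)$ with $G=\mathrm{AGL}(3,2)$, or product-action groups with component $\PSL(2,7)$ --- is never proved, only deferred to ``the classification of primitive permutation groups of prime-power degree together with lists of representations.'' That deferral is the theorem, not a routine verification. To close it you would need at least: (i) elimination of the remaining O'Nan--Scott types with nonabelian socle other than AS and PA (easy: their degrees are powers of $\abs{S}$ for a nonabelian simple group $S$, hence never prime powers); (ii) in the AS case, a CFSG-based input such as Guralnick's classification of simple groups with a subgroup of prime-power index, whose candidate list ($A_{p^k}$, $\PSL(n,q)$ with $(q^n-1)/(q-1)=p^d$, $\PSL(2,11)$, $M_{11}$, $M_{23}$, $\PSU(4,2)$) must then be cut down using the constraint $H\hookrightarrow\GL(d,p)$, e.g.\ by order bounds, leaving only $\PSL(2,7)$ in degree $8$; note also that an irreducible embedding plus a subgroup of index $p^d$ is still not sufficient --- the transitive complement to $T$ must exist, which is a cohomological condition (it exists for $\GL(3,2)$ precisely because $H^1(\GL(3,2),\mathbb{F}_2^3)\neq 0$); and (iii) in the PA case, an argument that the product decomposition is compatible with the affine structure so that the component is forced into case (ii). Your sketch gestures at all three but carries out none of them, as you yourself concede (``the main obstacle''). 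As it stands, the proposal is a correct reduction wrapped around a citation-shaped hole located exactly where Praeger's proof does its work.
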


Note that $\abs{\PSL(2,7)} =168$ is divisible by $7$. But the order of the symmetry group $G$ of $\LSce$ is not divisible by $7$. Thus $\PSL(2,7)$ is not a subgroup of $G$, and thus any primitive subgroup of $G$ is of Holomorph Affine type and still contains the translation subgroup $T$.

The correspondence theorem from group theory tells us that there is a bijective correspondence between conjugacy classes of subgroups in $G$ which contain $T$ and those in $G/T$. Because $G/T$ is much smaller than $G$, its conjugacy classes of subgroups are more easily found. \textsf{GAP} finds $86$ classes of subgroups of $G/T$ which are then pulled back to subgroups of $G$ containing $T$. These subgroups will all be point-transitive, for they all contain $T$, and the proposition tells us that every class of primitive subgroups is represented in the list. Hence, we obtained all of the point-primitive subgroups, and a number of extra transitive subgroups for comparison. The results are as follows:

\begin{table}[!ht]
\small
\begin{tabular}{lp{4cm}p{4cm}l}
$\mathcal{Q}$ & $\#$ classes of primitive subgroups of $\Aut(\mathcal{Q})$ & $\#$ having two line-orbits (a hemisystem) & $\#$ line-transitive\\
\midrule
$\GQ(3,5)$ & 10 & 4 & 6\\
$\LSce$ & 11 & 4 & 7\\
\bottomrule
\end{tabular}
\end{table}

Thus, we have shown that all known quadrangles agree with the hemisystem conjecture. The evidence from these non-classical examples is especially compelling. Despite these two quadrangles being very different from the classical examples, the same result holds. There is also some evidence in the results from the point-imprimitive groups. As one can see by looking at the tables of line orbit sizes contained in Appendix \ref{appendix:lineorbits}, the number of line-orbits for imprimitive groups are quite varied. But the primitive examples stand out as having
only one or two orbits on lines. This might just be an artefact of the primitive groups being larger and so having fewer line-orbits, but to our eyes there does seem to be something sharply different between large but imprimitive subgroups and primitive ones.

\subsection{Hemisystems} 


What Lemma \ref{LOL} tells us is that the line-orbits of a point-transitive automorphism group form geometric objects called $k$-covers of lines in generalised quadrangles. These are well known objects, appearing more commonly under the guise of the dual notion of \textit{m-ovoids}. Formally, if $\mathcal{G}$ is some generalised quadrangle, an $m$-ovoid of $\mathcal{G}$ is a subset $\mathcal{O} \subset \mathcal{P}$ such that every line of the quadrangle is incident with exactly $m$ points of $\mathcal{O}$. A $k$-cover is a subset $\mathcal{K}$ of $\mathcal{L}$ such that every point is incident with exactly $k$ lines of $\mathcal{K}$.

A recurring theme in the literature is that the most important examples of these objects are the \textit{hemisystems}, $m$-ovoids where $m=\frac{s+1}{2}$ (containing half of the points) and dually $k$-covers where $k = \frac{t+1}{2}$. An important early result in their study was B. Segre's 1965 proof \cite{SegreHemisystems} that any nontrivial $k$-cover in a $H(3,q^{2})$ quadrangle is a hemisystem. This was inspired by the existence of hemisystems for $H(3,9)$ stabilised by the groups $\PSL(3,4) \leq \Aut(H(3,9))$.

This is not the place to discuss all of the results about $k$-covers, hemisystems and their relation to the general theory of generalised quadrangles, so we limit ourselves to one example which exemplifies the sort of pattern which helped to inspire the conjecture. This example comes from Thas' 1989 paper \cite{THAS1989103}. This paper introduces $m$-ovoids and $k$-covers in full generality for the first time and proves many of the basic results about them. The paper deals with $m$-ovoids, but its results are easily dualised to ones about $k$-covers.  It assumes $m$-ovoids are non-trivial ($m\neq 0,s+1$). The notation $\mathcal{G}(A)$ denotes the induced subgraph of the point-collinearity graph $\mathcal{G}(\mathcal{P})$ on vertex set $A \subset \mathcal{P}$: the vertices are the elements of $A$, and two vertices are adjacent if and only if they correspond to distinct collinear points.
Thas proves the following generalisation of Segre's result:

\begin{corollary}[\cite{THAS1989103}] \label{Corollary}
If the generalised quadrangle $\mathcal{G}$ has order $(s,s^{2})$ and contains a non-trivial $m$-ovoid $\mathcal{O}$ then $m=\frac{s+1}{2}$. If $\mathcal{O}$ is an $\frac{s+1}{2}$-ovoid of $\mathcal{G}$ then $\mathcal{G}(\mathcal{O})$ and $\mathcal{G}(\mathcal{P}\setminus \mathcal{O})$ are strongly regular with parameters $v=\frac{s+1}{2}(s^{3}+1)$, $k=\frac{s-1}{2}(s^{2}+1)$, $\lambda=\frac{s-3}{2}$, $\mu=\frac{(s-1)^{2}}{2}$.
\end{corollary}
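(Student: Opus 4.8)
The plan is to prove both assertions at once by showing directly that $\mathcal{G}(\mathcal{O})$ is strongly regular and then extracting $m$ from the standard parameter relation. Throughout I write $d,\lambda_1,\mu_1$ for the valency and the two common-neighbour numbers of $\mathcal{G}(\mathcal{O})$, and use $|\mathcal{O}| = m(st+1)$ together with $t=s^2$.

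First I would record the two easy parameters. Fix $p\in\mathcal{O}$. Each of the $t+1$ lines through $p$ carries exactly $m$ points of $\mathcal{O}$, so $p$ has $(t+1)(m-1)$ neighbours in $\mathcal{O}$; thus $\mathcal{G}(\mathcal{O})$ is regular of valency $d=(t+1)(m-1)$. If $p,q\in\mathcal{O}$ are collinear on a line $L$, then because a generalised quadrangle has no triangles their common neighbours are exactly the other $s-1$ points of $L$, of which precisely $m-2$ lie in $\mathcal{O}$; hence $\lambda_1=m-2$, a constant. Substituting $t=s^2$, $m=\tfrac{s+1}{2}$ already matches the claimed $v,k,\lambda$.

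The crux is the non-collinear case: for $p\not\sim q$ in $\mathcal{O}$ I must show $\mu_1 = |\{p,q\}^{\perp}\cap\mathcal{O}|$ is constant, and this is where $t=s^2$ is used, through the classical fact that in a generalised quadrangle of order $(s,s^2)$ every triad of pairwise non-collinear points has exactly $s+1$ centres. Writing $c=|\{p,q\}^{\perp}\cap\mathcal{O}|$, I would double-count the pairs $(x,z)$ with $x\in\{p,q\}^{\perp}$, $z\in\mathcal{O}$, $z\sim x$, and $z$ collinear with neither $p$ nor $q$. Counting by $x$: a neighbour of $x$ is collinear with $p$ only if it lies on the line $xp$ (otherwise a triangle appears), and likewise for $q$, so each $x$ contributes $(t-1)(m-1)$ or $(t-1)m$ neighbours of this kind according as $x\in\mathcal{O}$ or not, giving $(t-1)\big[(s^2+1)m-c\big]$ in total. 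Counting by $z$: each such $z$ forms a triad with $p,q$ and so is collinear with exactly $s+1$ points of $\{p,q\}^{\perp}$, giving $(s+1)T$, where $T$ is the number of points of $\mathcal{O}$ forming a triad with $p,q$. Expressing $T$ a second way by partitioning $\mathcal{O}\setminus\{p,q\}$ according to collinearity with $p$ and $q$ (using $|\{p\}^{\perp}\cap\mathcal{O}|=(t+1)(m-1)$ and $|\{p\}^{\perp}\cap\{q\}^{\perp}\cap\mathcal{O}|=c$) and equating the two expressions for $T$ collapses, after routine simplification, to $\mu_1=c=(s+1)m-2s$, independent of the pair. Hence $\mathcal{G}(\mathcal{O})$ is strongly regular.

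Finally I would pin down $m$. For a strongly regular graph the parameters satisfy $d(d-1-\lambda_1)=(|\mathcal{O}|-1-d)\mu_1$; substituting $d=(t+1)(m-1)$, $\lambda_1=m-2$, $\mu_1=(s+1)m-2s$ and $t=s^2$ reduces this to
\[ 2m^2-3(s+1)m+(s+1)^2=0, \]
whose roots are $m=s+1$ and $m=\tfrac{s+1}{2}$. The root $m=s+1$ is the trivial ovoid $\mathcal{O}=\mathcal{P}$, so non-triviality forces $m=\tfrac{s+1}{2}$, and back-substitution returns the stated $v,k,\lambda,\mu$. Since $\mathcal{P}\setminus\mathcal{O}$ is an $(s+1-m)$-ovoid, hence again a $\tfrac{s+1}{2}$-ovoid, the identical argument gives the same parameters for $\mathcal{G}(\mathcal{P}\setminus\mathcal{O})$. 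I expect the main obstacle to be the constancy of $\mu_1$: the entire argument hinges on the triad-centre count, so the one genuinely non-elementary input is verifying that a generalised quadrangle of order $(s,s^2)$ really has every triad with $s+1$ centres. (Equivalently one can argue spectrally: an $m$-ovoid makes $\chi_{\mathcal{O}}-\tfrac{m}{s+1}\mathbf{1}$ an eigenvector of the collinearity graph for its eigenvalue $-(t+1)$, so $\mathcal{O}$ is an intriguing set, and the $(s,s^2)$ hypothesis is precisely what forces the subconstituents to be strongly regular.)
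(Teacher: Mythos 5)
Your argument is correct, and I could verify every step: the counts $d=(t+1)(m-1)$ and $\lambda_1=m-2$ are right (no triangles in a GQ, so common neighbours of collinear points lie on their line); in the double count the exclusion of exactly the two lines $xp$ and $xq$ is justified by the unique-collinear-point axiom, so each $x$ indeed contributes $(t-1)(m-1)$ or $(t-1)m$; equating $(t-1)\bigl[(s^2+1)m-c\bigr]=(s+1)\bigl[m(s^3+1)-2(s^2+1)(m-1)-2+c\bigr]$ does collapse to $c=(s+1)m-2s$ independently of the pair; and substituting into $d(d-1-\lambda_1)=(\abs{\mathcal{O}}-1-d)\mu_1$ reduces to $2m^2-3(s+1)m+(s+1)^2=0$, whose roots are $s+1$ and $\tfrac{s+1}{2}$, with back-substitution giving exactly the stated $v,k,\lambda,\mu$. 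Note, however, that the paper contains no proof of this statement to compare against: it is quoted verbatim from Thas \cite{THAS1989103}. In Thas's paper the corollary is extracted from his general theorem (Theorem \ref{placeholder} here) on when $\mathcal{G}(\mathcal{O})$ is strongly regular; your contribution is to make the $(s,s^2)$ case self-contained by showing that the triad-centre property (every triad in a GQ of order $(s,s^2)$ has exactly $s+1$ centres, which is the classical fact you correctly isolate as the one non-elementary input) automatically forces strong regularity of the subconstituent, after which the parameter identity pins down $m$. Two small points you should make explicit: (i) for $m=1$ your formula gives $\mu_1=1-s<0$, which is itself the contradiction ruling out ovoids of such quadrangles — worth saying, since for $m=1$ the graph $\mathcal{G}(\mathcal{O})$ is empty and the strong-regularity language is vacuous there; (ii) quoting the identity $d(d-1-\lambda_1)=(\abs{\mathcal{O}}-1-d)\mu_1$ requires that $\mathcal{G}(\mathcal{O})$ have both an edge and a non-edge, which holds because any clique in the collinearity graph of a GQ is contained in a line (so a non-trivial $m$-ovoid always contains a non-collinear pair), and an edge exists as soon as $m\geq 2$.
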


However the crucial result which supports the Hemisystem Conjecture is the following:

\begin{theorem}[\cite{THAS1989103}] \label{placeholder}
Let $\mathcal{O}$ be an $m$-ovoid of a generalised quadrangle $\mathcal{G}$ of order $(s,t)$. If $\mathcal{G}(\mathcal{O})$ is strongly regular then one of the following cases occurs:
\begin{itemize}
\item $m=\frac{s+1}{2}$ and $t=s^{2}$.
\item $m< \frac{s+1}{2}$. If m=1 then $t\leq s^{2}-s$. If $m>1$ then $t \leq s^{2}-2s$.
\item $m>\frac{s+1}{2}$ and $t=s^{2}-s$ or $s^{2}-s-1$. If $t=s^{2}-s-1$ then $m \neq s$.
\end{itemize}
$\mathcal{G}(\mathcal{O})$ and $\mathcal{G}(\mathcal{P}\setminus \mathcal{O})$ are both strongly regular if and only if one of the following occurs:

\begin{itemize}
\item $m=\frac{s+1}{2}$ and $t=s^{2}$.
\item $m=1,s$ and $t=s^{2}-s$.
\end{itemize}
\end{theorem}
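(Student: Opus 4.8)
The plan is to read the first three parameters of $\mathcal{G}(\mathcal{O})$ straight off the geometry, extract $\mu$ from the strong-regularity hypothesis, and then force the resulting parameter set through the feasibility constraints inherited from the ambient point-collinearity graph. Counting incident point–line pairs $(p,L)$ with $p\in\mathcal{O}$ in two ways gives $\abs{\mathcal{O}}=m(st+1)$. Fixing $p\in\mathcal{O}$, each of the $t+1$ lines through $p$ carries $m-1$ further points of $\mathcal{O}$, and since a generalised quadrangle contains no triangles every point of $\mathcal{O}$ collinear with $p$ arises this way, so $\mathcal{G}(\mathcal{O})$ is regular of valency $k=(t+1)(m-1)$; for collinear $p,q\in\mathcal{O}$ their common $\mathcal{O}$-neighbours are exactly the remaining $\mathcal{O}$-points on the line $pq$, so $\lambda=m-2$. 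These three values hold for \emph{every} $m$-ovoid. Invoking now the hypothesis that $\mathcal{G}(\mathcal{O})$ is strongly regular, non-collinear pairs share a constant number $\mu$ of $\mathcal{O}$-neighbours, and the standard identity $k(k-\lambda-1)=(\abs{\mathcal{O}}-k-1)\mu$ — using $k-\lambda-1=(m-1)t$ and $\abs{\mathcal{O}}-k-1=t(m(s-1)+1)$ — yields
\[
\mu=\frac{(t+1)(m-1)^{2}}{m(s-1)+1}.
\]

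Next I would impose the feasibility conditions on this parameter set. The restricted eigenvalues of $\mathcal{G}(\mathcal{O})$ are the roots $r>0>r'$ of $X^{2}-(\lambda-\mu)X-(k-\mu)$, and because $\mathcal{G}(\mathcal{O})$ is an induced subgraph of the point-collinearity graph $\mathcal{G}(\mathcal{P})$ — itself strongly regular with eigenvalues $s(t+1)$, $s-1$ and $-(t+1)$ — Cauchy interlacing confines these roots to $-(t+1)\le r'\le r\le s-1$. Combining the interlacing inequalities with the integrality of the eigenvalue multiplicities (and the Krein conditions, the strongly-regular analogue of the bound $t\le s^{2}$ of Lemma \ref{basiclemma}), one substitutes the expression for $\mu$ and reduces everything to inequalities in $s,t,m$. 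The governing quantity turns out to be the sign of $2m-(s+1)$: it controls the direction in which the feasibility inequalities bound $t$, so that $m=\tfrac{s+1}{2}$ forces the balanced value $t=s^{2}$, the regime $m<\tfrac{s+1}{2}$ produces the stated upper bounds (with $m=1$, where $\mathcal{G}(\mathcal{O})$ degenerates to a coclique, treated separately), and $m>\tfrac{s+1}{2}$ pins $t$ to $s^{2}-s$ or $s^{2}-s-1$. This case analysis — turning the feasibility conditions into the exact equalities and bounds while tracking the degenerate regimes $m=1$ and $m=s$ — is where the real work lies and is the step I expect to be the main obstacle.

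Finally, for the ``both strongly regular'' statement I would exploit the duality $\mathcal{O}\mapsto\mathcal{P}\setminus\mathcal{O}$: the complement of an $m$-ovoid is an $(s+1-m)$-ovoid, so if $\mathcal{G}(\mathcal{O})$ and $\mathcal{G}(\mathcal{P}\setminus\mathcal{O})$ are both strongly regular I may apply the first part simultaneously to $m$ and to $s+1-m$ and intersect the two conclusions. When $m=\tfrac{s+1}{2}$ the set is self-complementary and both conclusions give $t=s^{2}$. Otherwise, say $m<\tfrac{s+1}{2}<s+1-m$, the value $m$ lands in the upper-bound regime while $s+1-m$ lands in the ``$t=s^{2}-s$ or $s^{2}-s-1$'' regime; these are compatible only when $m=1$ (so $s+1-m=s$), and then the exclusion clause for $m=s$ removes $t=s^{2}-s-1$, leaving exactly $t=s^{2}-s$. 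This recovers precisely the two cases of the second assertion.
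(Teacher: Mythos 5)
You should first note that the paper contains no proof of this statement to compare against: Theorem~\ref{placeholder} is quoted directly from Thas \cite{THAS1989103}, so your sketch has to be judged on its own terms. The parts you do carry out are correct: $\abs{\mathcal{O}}=m(st+1)$, $k=(t+1)(m-1)$ and $\lambda=m-2$ hold for every $m$-ovoid, and under the strong-regularity hypothesis the identity $k(k-\lambda-1)=(\abs{\mathcal{O}}-k-1)\mu$ does give $\mu=(t+1)(m-1)^{2}/(m(s-1)+1)$ (this specialises correctly to the parameters of Corollary~\ref{Corollary}, e.g.\ to the Gewirtz graph for $s=3$, $t=9$, $m=2$). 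Your complementation argument for the last assertion is also sound as far as it goes, but it only proves the forward implication; the converse direction of the ``if and only if'' (that for $m=1,s$ and $t=s^{2}-s$ the graph $\mathcal{G}(\mathcal{P}\setminus\mathcal{O})$ really \emph{is} strongly regular, i.e.\ that the number of common $\mathcal{O}$-neighbours is constant) is a genuine geometric statement that your sketch never addresses.

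The serious gap, however, is the trichotomy itself, which is the entire content of the theorem and which you explicitly defer as ``the main obstacle''. Moreover, the tools you name cannot close it. Cauchy interlacing only yields the inequalities $-(t+1)\leq r'\leq r\leq s-1$, and inequalities (even supplemented by integrality of multiplicities and Krein conditions) cannot produce the exact equalities $t=s^{2}$ and $t\in\{s^{2}-s,\,s^{2}-s-1\}$. What is actually needed is the stronger rank/rigidity argument for subgraphs of a strongly regular host. Writing $\chi_{\mathcal{O}}=\tfrac{m}{s+1}\mathbf{1}+w$ with $w$ in the $-(t+1)$-eigenspace (every $m$-ovoid is an intriguing set of negative type), one shows: (i) a Delsarte--Cvetkovi\'c rank bound, namely that if $r\neq s-1$ then \emph{all} $\abs{\mathcal{O}}=m(st+1)$ eigenvalues of $A_{\mathcal{O}}$ are ``visible'' to the $-(t+1)$-eigenspace of the host, forcing $m(st+1)\leq \tfrac{s^{2}(st+1)}{s+t}$, i.e.\ $m(s+t)\leq s^{2}$; and (ii) a rigidity statement in the tight case: $r=s-1$ means eigenvectors of $A_{\mathcal{O}}$ extend by zero to eigenvectors of the ambient graph (equivalently, the Higman--Sims block $BB^{T}$, a polynomial in $A_{\mathcal{O}}$ and $J$, is singular on that eigenspace), and then $r=s-1$ is an \emph{equation} in $(s,t,m)$ which, combined with your formula for $\mu$, pins $t$ down --- for instance, for $m=\tfrac{s+1}{2}$ the condition that $s-1$ is a root of $X^{2}-(\lambda-\mu)X-(k-\mu)$ reduces exactly to $t=s^{2}$. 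It is this dichotomy (small $t$ versus tight interlacing plus an exact algebraic relation), run through the cases $2m<s+1$, $2m=s+1$, $2m>s+1$, that produces the statement; your remark that ``the governing quantity turns out to be the sign of $2m-(s+1)$'' describes the shape of the answer but is not a derivation of it.
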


As Theorem \ref{placeholder} shows, hemisystems seem to be the nicest and most regular examples, and if there is sufficient regularity, you are likely to get a hemisystem. This is important, because increased regularity would not be unexpected in the line-orbits of point-primitive groups. For example, suppose that one could prove that $\mathcal{G}(\mathcal{K})$ and $\mathcal{G}(\mathcal{L} \setminus \mathcal{K})$ must be strongly regular whenever $\mathcal{K}$ is the line orbit of a point-primitive group. Then Theorem \ref{placeholder} would imply that either $\mathcal{K}$ is trivial or $k=\frac{t+1}{2}$ and $s=t^{2}$ (the second part of Lemma \ref{LOL} eliminating $k=1,t$). 
This is very close to the Hemisystem Conjecture. The only difference is the added conclusion about $s=t^{2}$. However $\GQ(3,5)$ and $\LSce$ provide example of quadrangles which have point-primitive symmetry groups which have a hemisystem but which do not have order $(t^{2},t)$. So this slight extension is too much to ask for, but the idea behind it, that sufficient regularity of the $k$-cover implies that the $k$-cover is a hemisystem lends credence to the idea that something like the hemisystem conjecture might be true. Even if the conjecture as stated is not true, this makes it likely that there will be severe restrictions on what $k$ can be for line-orbits of point-primitive groups in general.

\appendix

\section{Code}\label{appendix:code}


\begin{scriptsize}

\begin{lstlisting}
# GAP code
LoadPackage("grape"); LoadPackage("atlasrep");

AtlasMaximalSubgroups := function( name )
    # uses AtlasRep to find maximal subgroups of a group stored in the Atlas
    # input: AtlasRep name
    # output: a record (maxes, group) containing a permutation group 
    #   pertaining to "name" and maximal subgroups of this permutation group 
    local tocs, gapname, numbers, maxs, wholegroup;
    if AtlasOfGroupRepresentationsInfo.remote = true  then
       tocs := AtlasTableOfContents( "remote" ).TableOfContents;
    else
       tocs := AtlasTableOfContents( "local" ).TableOfContents;
    fi;
    gapname:= First( AtlasOfGroupRepresentationsInfo.GAPnames, pair -> pair[1] = name );
    Print(gapname[3]!.nrMaxes, " maximal subgroups: ", gapname[3]!.structureMaxes, "\n");
    gapname:=gapname[2];
    numbers := List(tocs!.(gapname)!.maxes, t -> t[2]);
    maxs := List(numbers, t -> AtlasSubgroup(name, t));
    wholegroup := AtlasGroup(name);
    return rec(maxes:=maxs, group:=wholegroup);
end;

KnapsackSearch := function(Y,t)
    # Simple backtracking code to find Knapsack solutions
    # input: Y is a list of positive integers, 
    #  t is an integer that we want to find sums from Y for
    # output: characteristic vectors for the collected list of Y
    local L,U,ExtendSearch;
    L := Collected(Y);
    U:=[];
    ExtendSearch:=function(S,a,j)
        local x,y,AA,SS,k,i;
        for i in [j..Size(S)] do
            x:=S[i][1]; y:=S[i][2];
            if y <= 0 then
                continue;
            else
                if x > a then break;
                elif x = a then
                    AA := [];
                    for k in [1..Size(S)] do
                        if k=i then
                            Add(AA,L[i][2]-y+1);
                        else
                            Add(AA,L[k][2]-S[k][2]);
                        fi;
                    od;
                    Add(U,StructuralCopy(AA));
                    break;
                else
                    SS := StructuralCopy(S); SS[i][2] := y-1;
                    ExtendSearch(SS,a-x,i);
                fi;
            fi;
        od;
    end;
    ExtendSearch(L,t,1);
    return U;
end;

GeneratePossible_s_t := function(G,M)
    local B, D, b, divisors_b, c, j, r;
    # indices of maximal subgroups
    B := List(M, m -> IndexNC(G,m));
    # find all possible solutions in s and t
    D := [];
    for b in B do
        # proper nontrivial divisors of b
        divisors_b := Filtered(DivisorsInt(b), t -> 1 < t and t < b);
        c := [];
        for j in divisors_b do
            r := ((b/j)-1)/(j-1);
            if IsInt(r) and r <= (j-1)^2 and (j-1) <= r^2 and 
                RemInt( (j-1)*r*((j-1)*r+1), j-1+r ) = 0 then
               Add(c,[j-1,r]);
            fi;
        od;
        Add(D, c);
    od;
    return D; # indexed by maximal subgroup
end;

OrdersOfElementsTest := function(G, M, s, t)
    # We check that Lemma 2.2 is satisfied
    local l;
    l := (1+t)*(1+s*t);
    return ForAny(PrimeDivisors( Order(G) ), q -> 
    	(RemInt(Size(M),q)=0 or RemInt(l,q)<>0) and (q > 1+s and q > 1+t) );
end;

FilterByOrdersOfElementsTest := function(G, M, D)
    # Remove elements of D that are invalid
    local i, e, Dcopy;
    Dcopy := StructuralCopy(D);;
    for i in [1..Length(D)] do
        for e in D[i] do
            if OrdersOfElementsTest(G, M[i], e[1], e[2]) then
                Remove(Dcopy, Position(Dcopy,e)); 
            fi;
        od;
    od;
    return Dcopy;
end;

LineOrbitsTest := function(B, s, t)
    # check that we have a k-cover induced
    local k, L, POS, b, K;
    k := 1 + s*t;
    L := (t + 1) * (s*t + 1);
    POS := [];
    for b in B do
        K := LcmInt(b,k);
        if K <= L then
           Add(POS,K/k);
        fi;
    od;
    return not IsEmpty(KnapsackSearch(POS,t+1));
end;

FilterByLineOrbitsTest := function(G, M, D)
    local d, e, Dcopy, B;
    B := List(M, m -> IndexNC(G,m));
    Dcopy := StructuralCopy(D);;
    for d in Dcopy do
        for e in d do
            if not LineOrbitsTest(B, e[1], e[2]) then
                Remove(d,Position(d,e));
            fi;
        od;
    od;
    return Dcopy;
end;

MakeCollinearityGraphs := function(G, M, D)
    # check that there is a sum of subdegrees for the neighbourhood of a point,
    # then construct the collinearity graph of a possible GQ of order (s,t)
    local l, Y, d, s, t, A, FCA, S, YY, GG, a, DC, B,
    V, K, W, w, N, I, O, Inc, gamma, GlPa, FF;
    FF:=[];
    B := List(M, m -> IndexNC(G,m));
    for l in [1..Size(D)] do
        if not IsEmpty(D[l]) then
            DC := DoubleCosetRepsAndSizes(G,M[l],M[l]);
            Y := List(DC, dc -> dc[2]/Size(M[l]) );
            Remove(Y,1);
            for d in D[l] do
                s := d[1]; t := d[2];
                A := KnapsackSearch(Y,s*(t+1));
                if not IsEmpty(A) then
                    FCA := FactorCosetAction(G, M[l]);
                    S := Image(FCA, M[l]);
                    O := ShallowCopy(OrbitsDomain(S, [2..B[l]]));
                    YY := List(O,Size);
                    GG:= List(Set(YY), c -> Positions(YY,c));
                    for a in A do
                        K:=[];
                        V := List([1..Size(a)], i -> Combinations(GG[i],a[i]));
                        Add(V,List(Cartesian(K),Concatenation));
                        W := Concatenation(V);
                        for w in W do
                            N := Concatenation(O{w});
                            I := Image(FCA);
                            Inc := function(x,y) 
                                return OnPoints(y,RepresentativeAction(I,x,1)) in N and x<>y; 
                            end;
                            # construct vertex-transitive graph from putative neighbourhood
                            gamma := Graph(I, [1..B[l]], OnPoints, Inc);
                            # check if graph is strongly regular
                            if IsDistanceRegular(gamma) and Diameter(gamma)=2 then
                                GlPa:=GlobalParameters(gamma);
                                # check the graph has the right parameters
                                if GlPa=[[0,0,s*(t+1)],[1,s-1,s*t],[t+1,(s-1)*(t+1),0]] then
                                    Add(FF,[l,d,gamma,GlPa]);
                                fi;
                            fi;
                        od;
                    od;
                fi;
            od;
        fi;
    od;
    return FF;
end;

RunThroughTests := function(G, M)
    local D, graphs;
    D := GeneratePossible_s_t(G,M);;
    if ForAll(D,IsEmpty) then
        Print("No feasible orders (s,t)\n");
    else
        Print("Possible orders: ", Union(D), "\n");
        Print("... doing orders of elements tests\c");
        D := FilterByOrdersOfElementsTest(G,M,D);;
        Print(" * \n");
        if IsEmpty(D) then
            Print("Fails the orders of elements test (Lemma 2.2)\n");
        else
            Print("... doing line-orbits test\c");
            D := FilterByLineOrbitsTest(G, M, D);
            Print(" * \n");
            if IsEmpty(D) then
                Print("Fails the line-orbits k-cover test test (Lemma 2.3)\n");
            else
                graphs := MakeCollinearityGraphs(G, M, D);
                if IsEmpty(graphs) then
                    Print("No feasible collinearity graphs\n");
                else
                    return graphs;
                fi;
            fi;
        fi;
    fi;
end;

\end{lstlisting}

\end{scriptsize}

\begin{scriptsize}

\begin{lstlisting}[language=python]
# Python code

from sympy import *

indices=[97239461142009186000,5791748068511982636944259375,
439909863614532427326210000000,512372707698741056749515292734375,
16009115629875684006343550944921875,282599644298926271851701207040000000,
391965121389536908413379198941796875,1484028541986258159045049319424000000,
4050306254358548053604918389065234375,6065553341050124859256025907200000000,
147971784380684498443615773616452403200,377694424605514962329798663208960000000,
16458603283969466072643078298009600000000,69632552355255433384259177414656000000000,
2137612234906118719276348954925160732819456,4773365227577903302562875496013496320000000,
28114639032330054704286996987125956608000000,69506875251140892549372895050469742538129408,
360804534248235702038349794668116443136000000,406922407046882370719943377445244108800000000,
718237710928455889676853248854854006227337216,1227948204794415624584299721349471928320000000,
1589822867634109834649512818264086937600000000,
2672015293632648399095436193656450916024320000,
6440808214679248895891202946141582786560000000,
11133397056802701662897650806901878816768000000,
13812263671701074801477947093362577042833408000,
23847343014511647519742692273961304064000000000,
70848890361471737854803232407557410652160000000,
77933779397618911640283555648313151717376000000,
463738191456905920504166612122193960632320000000,
547294894007597532814267768386619441152000000000,
681636554498124591605978620830727274496000000000,
922293247309099546038858646725599428608000000000,
1277021419351060909899958126235445362688000000000,
4516082186421377575935948496320762111590400000000,
7870810683757187569515487092944776514764800000000,
11129716594965742092099998690932655055175680000000,
33169845024405290471529552748838701026508800000000,
49077831923864970595630460699812363763712000000000,
118131202455338139749482442245864145761075200000000,
492693551703971265784426771318116315247411200000000];

def find_s_and_t(n):
    divs=divisors(n)
    divs2=[s for s in divs if (s*(1+(s-1)**(3/2))<=n and n<=s*(1+(s-1)**3))];
    for i in range(len(divs2)):
        splus1=divs2[i]
        s=splus1-1
        if ((n/splus1-1) % s) == 0:
            t=(n/splus1-1)/s
            if ((s*t*(s*t+1)) % (s+t))==0:
                print(s,t)
    return

for n in indices:
    print(n)
    find_s_and_t(n)    
    print("----")
\end{lstlisting}

\end{scriptsize}

\section{Line orbit sizes of the known point-transitive generalised quadrangles}\label{appendix:lineorbits}

The following show all (conjugacy classes of) point-transitive subgroups of the symmetry groups of the classical polygons.

\begin{table}[!ht]
\centering
\scriptsize
\subfigure[$W(3,2)$]{
\begin{tabular}{ l c c } \toprule
Subgroup&Is Primitive?&Lengths of line-orbits\\
\midrule
$A_{5}$&$\times$&$5,10$\\
$S_{5}$&$\times$&$5,10$\\
$A_{6}$&$\checkmark$&$15$\\
$S_{6}$&$\checkmark$&$15$\\
\bottomrule \end{tabular}}
\subfigure[$W(3,3)$]{
\begin{tabular}{ l c c } \toprule
Subgroup&Is Primitive?&Lengths of line-orbits\\
\midrule
$2^{4}.5$&$\times$&$20,20$\\
$S_{5}$&$\times$&$10,30$\\
$S_{5}$&$\times$&$30,10$\\
$2^{4}.5.2$&$\times$&$20,20$\\
$2\times S_{5}$&$\times$&$30,10$\\
$2^{4}.5.4$&$\times$&$40$\\
$A_{6}$&$\times$&$10,30$\\
$S_{6}$&$\times$&$10,30$\\
$S_{6}$&$\times$&$10,30$\\
$2 \times A_{6}$&$\times$&$10,30$\\
$2^{4}.A_{5}$&$\times$&$40$\\
$2 \times S_{6}$&$\times$&$10,30$\\
$2^{4}.S_{5}$&$\times$&$40$\\
$\PSp(4,3)$&$\checkmark$&$40$\\
$\PSp(4,3).2$&$\checkmark$&$40$\\
\bottomrule \end{tabular}}
\end{table}

\begin{table}[!ht]
\centering
\scriptsize
\subfigure[$W(3,4)$]{
\begin{tabular}{ l c c } \toprule
Subgroup&Is Primitive?&Lengths of line-orbits\\
\midrule
$\PSL(2,16)$&$\times$&$17,68$\\
$\PSL(2,16).2$&$\times$&$17,68$\\
$\PSL(2,16).4$&$\times$&$17,68$\\
$\PSp(4,4)$&$\checkmark$&$85$\\
$\PSp(4,4).2$&$\checkmark$&$85$\\
\bottomrule \end{tabular}}
\subfigure[$W(3,5)$]{
\begin{tabular}{ l c c } \toprule
Subgroup&Is Primitive?&Lengths of line-orbits\\
\midrule 
$\PSL(2,25)$&$\times$&$130,26$\\
$\PSL(2,25).2$&$\times$&$130,26$\\
$2\times \PSL(2,25)$&$\times$&$130,26$\\
$2\times \PSL(2,25).2$&$\times$&$130,26$\\
$\PSp(4,5)$&$\checkmark$&$156$\\
$\PSp(4,5).2$&$\checkmark$&$156$\\
\bottomrule \end{tabular}}
\end{table}
\vspace{-0.5cm}
\begin{table}[!ht]
\centering
\scriptsize
\subfigure[$Q(4,2)$]{
\begin{tabular}{ l c c } \toprule
Subgroup&Is Primitive?&Lengths of line-orbits\\
\midrule
$A_{5}$&$\times$&$5,10$\\
$S_{5}$&$\times$&$5,10$\\
$A_{6}$&$\checkmark$&$15$\\
$S_{6}$&$\checkmark$&$15$\\
\bottomrule \end{tabular}}
\subfigure[$Q(4,3)$]{
\begin{tabular}{ l c c } \toprule
Subgroup&Is Primitive?&Lengths of line-orbits\\
\midrule
$2^{4}.5.4$&$\times$&$40$\\
$2^{4}.A_{5}$&$\times$&$40$\\
$2^{4}.S_{5}$&$\times$&$40$\\
$\PSp(4,3)$&$\checkmark$&$40$\\
$\PSp(4,3).2$&$\checkmark$&$40$\\
\bottomrule \end{tabular}}
\end{table}

\begin{table}[!ht]
\centering
\scriptsize
\subfigure[$Q(4,4)$]{
\begin{tabular}{ l c c } \toprule
Subgroup&Is Primitive?&Lengths of line-orbits\\
\midrule
$\PSL_{2}(16)$&$\times$&$68,17$\\
$\PSL_{2}(16).2$&$\times$&$68,17$\\
$\PSL_{2}(16).4$&$\times$&$68,17$\\
$\PSp_{4}(4)$&$\checkmark$&$85$\\
$\PSp_{4}(4).2$&$\checkmark$&$85$\\
\bottomrule \end{tabular}}
\subfigure[$Q(4,5)$]{
\begin{tabular}{ l c c } \toprule
Subgroup&Is Primitive?&Lengths of line-orbits\\
\midrule
$\PSp(4,5)$&$\checkmark$&$156$\\
$\PSp(4,5).2$&$\checkmark$&$156$\\
\bottomrule \end{tabular}}
\end{table}

\begin{table}[!ht]
\centering
\scriptsize
\subfigure[$H(3,4)$]{
\begin{tabular}{ l c c } \toprule
Subgroup&Is Primitive?&Lengths of line-orbits\\
\midrule
$\PSU(4,2)$&$\checkmark$&$27$\\
$\PSU(4,2).2$&$\checkmark$&$27$\\
\bottomrule \end{tabular}}
\subfigure[$H(3,9)$]{
\begin{tabular}{ c c c } \toprule
Subgroup&Is Primitive?&Lengths of line-orbits\\
\midrule
$\PSL(3,4)$&$\checkmark$&$56,56$\\
$\PSL(3,4).2$&$\checkmark$&$112$\\
$\PSL(3,4).2$&$\checkmark$&$56,56$\\
$\PSL(3,4).2$&$\checkmark$&$112$\\
$\PSL(3,4).2^{2}$&$\checkmark$&$112$\\
$\PSU(4,3)$&$\checkmark$&$112$\\
$\PSU(4,3).2$&$\checkmark$&$112$\\
$\PSU(4,3).2$&$\checkmark$&$112$\\
$\PSU(4,3).2$&$\checkmark$&$112$\\
$\PSU(4,3).4$&$\checkmark$&$112$\\
$\PSU(4,3).2^{2}$&$\checkmark$&$112$\\
$\PSU(4,3).2^{2}$&$\checkmark$&$112$\\
$\PSU(4,3).D_{8}$&$\checkmark$&$112$\\
\bottomrule \end{tabular}}
\end{table}

\begin{table}[!ht]
\centering
\scriptsize
\subfigure[$Q^{-}(5,2)$]{
\begin{tabular}{ l c c } \toprule
Subgroup&Is Primitive?&Lengths of line-orbits\\
\midrule
$3^{2}.3$&$\times$&$9,9,9,9,9$\\
$9.3$&$\times$&$9,27,9$\\
$3^{2}.3.2$&$\times$&$9,9,9,9,9$\\
$3^{2}.6$&$\times$&$9,9,9,18$\\
$9.6$&$\times$&$9,9,27$\\
$3^{3}.3$&$\times$&$9,27,9$\\
$3^{2}.3.4$&$\times$&$9,18,18$\\
$3^{2}.3.(2\times2)$&$\times$&$9,9,9,18$\\\
$3^{3}.3.2$&$\times$&$27,9,9$\\
$3^{3}.3.2$&$\times$&$9,27,9$\\
$3^{3}.3.2$&$\times$&$9,27,9$\\
$3^{3}.3.Q_{8}$&$\times$&$9,36$\\
$3^{3}.3.8$&$\times$&$9,36$\\
$3^{3}.(2\times2).3$&$\times$&$9,36$\\
$3^{3}.(2\times2).3.2$&$\times$&$9,18,18$\\
$3^{3}.3.(2\times2)$&$\times$&$27,18$\\
$3^{3}.3.QD_{16}$&$\times$&$27,9,9$\\
$3^{3}.(2\times2).3.2$&$\times$&$9,36$\\
$3^{3}.(2\times2).3.2$&$\times$&$18,27$\\
$(S_{3} \times S_{3} \times S_{3}).3$&$\times$&$27,18$\\
$3^{3}.3.Q_{8}.3$&$\times$&$9,36$\\
$3^{3}.3.Q_{8}.3.2$&$\times$&$36,9$\\
$3^{3}.(2\times2).3.2.2$&$\times$&$27,18$\\
$\PSU(4,2)$&$\checkmark$&$45$\\
$\PSU(4,2).2$&$\checkmark$&$45$\\
\bottomrule \end{tabular}}
\subfigure[$Q^{-}(5,3)$]{
\begin{tabular}{ l c c } \toprule
Subgroup&Is Primitive?&Lengths of line-orbits\\
\midrule
$4\times \PSL(3,2)$&$\times$&$84,168,28$\\
$2\times (\PSL(3,2).2)$&$\times$&$84,168,28$\\
$4\times (\PSL(3,2).2)$&$\times$&$84,168,28$\\

$\PSU(3,3)$&$\times$&$252,28$\\
$2\times \PSU(3,3)$&$\times$&$252,28$\\
$\PSU(3,3).2$&$\times$&$252,28$\\
$\PSU(3,3).2$&$\times$&$252,28$\\
$2\times(\PSU(3,3).2)$&$\times$&$252,28$\\
$4\times \PSU(3,3)$&$\times$&$252,28$\\
$2\times(\PSU(3,3).2)$&$\times$&$252,28$\\
$\PSU(3,3).D_{8}$&$\times$&$252,28$\\

$\PSL(3,4).2$&$\times$&$280$\\
$\PSL(3,4).2$&$\times$&$280$\\
$\PSL(3,4).4$&$\times$&$280$\\

$\PSU(4,3)$&$\checkmark$&$280$\\
$\PSU(4,3).2$&$\checkmark$&$280$\\
$\PSU(4,3).2$&$\checkmark$&$280$\\
$\PSU(4,3).2$&$\checkmark$&$280$\\
$\PSU(4,3).4$&$\checkmark$&$280$\\
$\PSU(4,3).2^{2}$&$\checkmark$&$280$\\
$\PSU(4,3).2^{2}$&$\checkmark$&$280$\\
$\PSU(4,3).D_{8}$&$\checkmark$&$280$\\
\bottomrule \end{tabular}}
\end{table}

\begin{table}[!ht]
\centering
\scriptsize
\subfigure[$H(4,4)$]{
\begin{tabular}{ l c c } \toprule
Subgroup&Is Primitive?&Lengths of line-orbits\\
\midrule
$\PSU(5,2)$&$\checkmark$&$297$\\
$\PSU(5,2).2$&$\checkmark$&$297$\\
\bottomrule \end{tabular}}
\subfigure[$H(4,4)^{D}$]{
\begin{tabular}{ l c c } \toprule
Subgroup&Is Primitive?&Lengths of line-orbits\\
\midrule
$\PSU(5,2)$&$\checkmark$&$165$\\
$\PSU(5,2).2$&$\checkmark$&$165$\\
\bottomrule \end{tabular}}
\end{table}

\newpage


\end{document}